\numberwithin{equation}{section}
\newtheorem{theorem}{Theorem}[section]
\newtheorem{lemma}{Lemma} [section]
\newtheorem{proposition}[theorem]{Proposition}
\newtheorem{corollary}[theorem]{Corollary}
\newtheorem{remark}{Remark}[section]
\begin{document}
\title{Multiplicity of Positive Solutions of Nonlinear Elliptic Equation  with Gradient Term}

\author[a]{Fei Fang}
\author[b]{Zhong Tan}
\author[a]{Huiru Xiong$^{(\textrm{\Letter})}$\thanks{$^{(\textrm{\Letter})}$Corresponding Author\quad E-mail: xionghuiru@126.com }
}
\affil[a]{School of Mathematics and Statistics, Beijing Technology and Business University, Beijing 100048, China}
\affil[b]{School of Mathematical Sciences, Xiamen University, 361005, P. R. China}
\maketitle

\noindent \textbf{\textbf{Abstract:}}  In this paper,  we consider the following nonlinear elliptic equation with gradient term:
\[
\left\{ \begin{gathered}
 - \Delta u - \frac{1}{2}(x \cdot \nabla u) + (\lambda a(x)+b(x))u =  \beta u^q +u^{2^*-1}, \hfill \\
  0<u \in {H_K^{1}(\mathbb{R}^N)}, \hfill \\
\end{gathered}  \right .
\]
where $\lambda, \beta \in (0,\infty), q \in (1,2^*-1), 2^* = 2N/(N-2), N\geq3, a(x), b(x): \mathbb{R}^N \to \mathbb{R}$ are continuous functions, and $a(x)$ is nonnegative  on $\mathbb{R}^N$. When $\lambda$  is large enough, we prove the existence and multiplicity of positive solutions to the equation.

\noindent  \textbf{Keywords:} Elliptic  equation, Gradient term, Positive solutions, Critical growth

\noindent  \textbf{Mathematics Subject Classification(2010):}  35J60, 35D30, 35B38.

\section{Introduction}
In this paper,  we  consider the following equation:
\begin{equation}\label{e1}
\left\{ \begin{gathered}
Lu:=- \Delta u - \frac{1}{2}(x \cdot \nabla u)  = f(x,u), \hfill \\
  0<u \in {H_K^{1}(\mathbb{R}^N)}. \hfill \\
\end{gathered}  \right .
\end{equation}
The operator $L$ is closely related to the self-similar solutions of the heat equation, which was studied by Escobedo and Kavian in \cite{c8} (also see \cite{c25,c22}).
 The operator $L$ appears in the process of looking for the self-similar solutions
 $$v(t,x)=t^{-1/(p-2)}u(t^{-1/2}x)$$
 of the heat equation
 \[v_t - \Delta v =|v|^{p-2}v.\]
  Escobedo and Kavian expressed the operator $L$ as the form of a divergence, that is,
\[Lu:=- \Delta u - \frac{1}{2}(x \cdot \nabla u)= - \frac{1}{K}\nabla  \cdot (K\nabla u) ,\]
where $K(x): = {e^{|x{|^2}/4}},$ so the operator $L$  has a variational structure.
They also equipped the operator $L$ with a weighted Sobolev space and proved related embedding theorem in  \cite{c8}. On the other hand, assume that  $(M,g)$ is a Riemannian manifold, $f$ is a smooth function on $M$, and the weight volume of $M$ is of the form $\mathrm{e}^{-f} dV_g$. The operator  $L'$ is defined by
$$
L'u:=\Delta_g u-\langle\nabla_g f, \nabla_g u\rangle,
$$
where $\nabla_g$ and $\Delta_g$ denote the gradient operator and Laplace operator on $M$ respectively. It is easy to see that the operator  $L'=L$ when  $M=\mathbb{R}^N, g$ is the unit matrix, $\nabla_g f=x$.  The operator  $L'$is an important research object in geometric analysis, which is closely related to  Ricci solution and  Ricci flow. The reader is referred to the paper \cite{c2,c3,c4,c5,c6} for more studies on the properties and applications of the operator  $L'$.

In recent years, the equation \eqref{e1} has been studied and some results haved been obtained. In 2004, if $f(x,u)=\frac{1}{p-1}u+u^p$, Naito  \cite{c20} obtained at least two positive self-similar solutions. In 2007, Catrina et al.  \cite{c21} established the existence of positive solutions  when  considered the case $f(x,u)=u^{2^*-1}+\lambda|x|^{\alpha -2}u$, where $2^*=2N/(N-2)$, $\alpha \geq 2$. In 2014, Furtado et al. in \cite{c22} proved the existence of at least two nonnegative nontrivial solutions for the equation when $f(x,u)=a(x)|u|^{q-2}u+b(x)|u|^{p-2}u$, with $1<q<2<p\leq2^*$ and certain conditions on $a(x)$ and $b(x)$. In 2017, Li et al.  \cite{c23} obtained a ground state solution for the equation \eqref{e1}. In 2019, Figueiredo investigated the case of changing sign solutions for the equation in \cite{c24}.

Now we assume that $f(x,u)=\beta u^q +u^{2^*-1}-(\lambda a(x)+b(x))u$ and study following equation:
\begin{equation}\label{a1}
\left\{ \begin{gathered}
 - \Delta u - \frac{1}{2}(x \cdot \nabla u) + (\lambda a(x)+b(x))u =  \beta u^q +u^{2^*-1}, \hfill \\
  0<u \in {H_K^{1}(\mathbb{R}^N)}. \hfill \\
\end{gathered}  \right .
\end{equation}
When the equation \eqref{a1} does not contain the gradient term, it becomes the following elliptic equation:
\begin{equation}\label{a2}
\left\{ \begin{gathered}
 - \Delta u  + (\lambda a(x)+b(x))u =  \beta u^q +u^{2^*-1}, \hfill \\
  0<u \in {H_K^{1}(\mathbb{R}^N)}. \hfill \\
\end{gathered}  \right .
\end{equation}
Claudianor et al. in \cite{c13} proved the multiplicity of positive solutions for the equation \eqref{a2}. We adopt a similar proof strategy as in \cite{c13} (also see \cite{c12,c7}) to establish the multiplicity of positive solutions for the equation \eqref{a1} with the gradient term. In order to obtain our conclusions, we make the following assumptions:

$(a_1)$  $a(x)\in C(\mathbb{R}^N,\mathbb{R})$ and $a(x)\geq 0$ for all $x \in \mathbb{R}^N$. The set $\text{int}\ a^{-1}(0) := \Omega$ is a nonempty bounded open set with smooth boundary, consisting of $k$ connected components $\Omega_j$, where $j\in\{1,\dots,k\}$. Moreover, we have $d(\Omega_i,\Omega_j)>0$ for $i\ne j$. In other words,
\[\Omega=\Omega_1\cup\Omega_2\cup\cdots\cup\Omega_k,\]
and $a^{-1}(0) := \overline{\Omega}$.

$(b_1)$ $b(x)\in C(\mathbb{R}^N,\mathbb{R})$ and there exists a positive constant $M_1$ such that
\begin{equation} \label{e3}
|b(x)|\leq M_1, \forall x \in \mathbb{R}^N .
\end{equation}

$(a_2)$ There exists a positive constant $M_0$ such that $a(x)$ and $b(x)$ verify
\begin{equation}\label{e2}
0<M_0\leq a(x)+b(x), \forall x \in \mathbb{R}^N.
\end{equation}

For any $j\in \{1,\dots,k\}$, we fix a bounded open subset $\Omega_j^{'}$ with smooth boundary satisfying:

(i) $\overline{\Omega_j}\subset\Omega_j^{'},$

(ii) $\overline{\Omega_j^{'}}\cap \overline{\Omega_l^{'}}=\emptyset$ for all $l\ne j.$\\
Additionally, we also fix a nonempty subset $\Gamma\subset\{1,\dots,k\}$, and define the sets
\[{\Omega _\Gamma } = \bigcup\limits_{j \in \Gamma } {{\Omega _j}}, \quad
{\Omega _\Gamma^{'} } = \bigcup\limits_{j \in \Gamma } {{\Omega _j^{'}}}.\]

The main theorem of this paper is given below.
\begin{theorem}\label{t1}
Let $a,b$ satisfy $(a_1),(a_2)$ and $(b_1)$. For any nonempty subset $\Gamma\subset\{1,\dots,k\}$, there exist constants $\beta^*>0$ and $\lambda^*=\lambda^*(\beta^*)$, such that for any $\beta\geq \beta^*$ and $\lambda\geq \lambda^*$, the equation \eqref{a1} has a family of positive solutions $\{u_{\lambda}\}$ with the following property:
For any sequence $\lambda_n\to\infty$, there exists a subsequence $\{\lambda_{n_i}\}$ such that $u_{\lambda_{n_i}}$ strongly converges  in ${H_K^{1}(\mathbb{R}^N)}$ to $u(x)=0$  for $x\ne\Omega_\Gamma$,  and the restriction $u|_{\Omega_j}$ is a least energy solution of the problem below for all $j\in\Gamma$:
\[\left\{ \begin{gathered}
   - \Delta u - \frac{1}{2}(x \cdot \nabla u) + b(x)u = \beta {u^q} + {u^{2^* - 1}}  \in \Omega_j,\hfill \\
  u > 0 \quad {\rm in}\ \Omega_j ,\hfill \\
  u = 0 \quad {\rm on} \ \partial \Omega_j. \hfill \\
\end{gathered}  \right.\]
\end{theorem}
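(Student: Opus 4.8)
The plan is to adapt the penalization scheme of del Pino and Felmer to the weighted setting. Writing $L$ in divergence form $Lu=-\tfrac1K\nabla\cdot(K\nabla u)$ with $K=\mathrm{e}^{|x|^2/4}$, equation \eqref{a1} is the Euler--Lagrange equation, on the space $E_\lambda$ obtained by completing $C_c^\infty(\mathbb{R}^N)$ under $\|u\|_\lambda^2=\int_{\mathbb{R}^N}K\big(|\nabla u|^2+(\lambda a+b)u^2\big)\,dx$, of the functional
\[
I_\lambda(u)=\tfrac12\|u\|_\lambda^2-\tfrac{\beta}{q+1}\int_{\mathbb{R}^N}K\,(u^+)^{q+1}\,dx-\tfrac1{2^*}\int_{\mathbb{R}^N}K\,(u^+)^{2^*}\,dx .
\]
By $(a_2)$ together with $a\ge0$ one has $\lambda a+b\ge M_0>0$ for all $\lambda\ge1$, so $\|\cdot\|_\lambda$ is a genuine norm, $E_\lambda\hookrightarrow H_K^1(\mathbb{R}^N)$ continuously, and $E_\lambda\hookrightarrow L^p_K(\mathbb{R}^N)$ compactly for $2\le p<2^*$ by the Escobedo--Kavian embeddings. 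First I would fix a penalized nonlinearity $\tilde g(x,s)$ that coincides with $\beta(s^+)^q+(s^+)^{2^*-1}$ on $\Omega_\Gamma'$ and, on $\mathbb{R}^N\setminus\Omega_\Gamma'$, is truncated so that $\tilde g(x,s)\le\mu s^+$ there for a fixed $\mu\in(0,M_0)$, and work with the modified functional $\tilde I_\lambda$. Since $\mu<M_0\le\lambda a+b$, along any critical point of $\tilde I_\lambda$ the exterior nonlinear term is absorbed by the linear part; this is the mechanism that will later supply a (localized) Palais--Smale condition and that will let the penalization be switched off for large $\lambda$.

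Next I would run the min--max. For $j\in\{1,\dots,k\}$ let $c_j>0$ be the mountain-pass (least energy) level of the limiting problem on $\Omega_j$ displayed in the statement, attained by a positive $w_j\in H_0^1(\Omega_j)$ extended by zero to $\mathbb{R}^N$, and set $c_\Gamma=\sum_{j\in\Gamma}c_j$. Using the $|\Gamma|$-parameter family of test functions built from $\{w_j\}_{j\in\Gamma}$ (essentially the product over $j\in\Gamma$ of mountain-pass caps in the mutually disjoint $\Omega_j$), all supported in $\Omega\subset\Omega_\Gamma'$ where $\tilde g$ agrees with the original nonlinearity and $\lambda a\equiv0$, one obtains a min--max level $c_{\lambda,\Gamma}$ of $\tilde I_\lambda$ with $\delta\le c_{\lambda,\Gamma}$ for some $\delta>0$ independent of $\lambda$ and $\limsup_{\lambda\to\infty}c_{\lambda,\Gamma}\le c_\Gamma$. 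This is where the hypothesis $\beta\ge\beta^*$ is used: a Brezis--Nirenberg-type estimate — testing the limiting functional on $\Omega_j$ with a truncated Talenti instanton plus a large multiple of a fixed positive function — shows that, for $\beta$ large, each $c_j$ stays strictly below the energy level $\tfrac1N S^{N/2}$ at which compactness of the relevant (localized) Palais--Smale sequences of $\tilde I_\lambda$ can first fail; choosing $\beta^*$ accordingly and then $\lambda^*=\lambda^*(\beta^*)$ large, the usual deformation/concentration-compactness argument gives $\tilde I_\lambda$ a critical point $u_\lambda$ at level $c_{\lambda,\Gamma}$ for $\beta\ge\beta^*$, $\lambda\ge\lambda^*$. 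Working with $u^+$ in $\tilde I_\lambda$ makes $u_\lambda\ge0$, and the strong maximum principle together with $c_{\lambda,\Gamma}>0$ gives $u_\lambda>0$.

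For the asymptotics as $\lambda\to\infty$: from $\tilde I_\lambda(u_\lambda)=c_{\lambda,\Gamma}\le C$ and $\tilde I_\lambda'(u_\lambda)=0$ one gets $\|u_\lambda\|_\lambda\le C$ uniformly in $\lambda$, so along any $\lambda_n\to\infty$ a subsequence satisfies $u_{\lambda_n}\rightharpoonup u$ in $H_K^1(\mathbb{R}^N)$. Boundedness of $\lambda_n\int_{\mathbb{R}^N}K\,a\,u_{\lambda_n}^2$ forces $\int_{\mathbb{R}^N}K\,a\,u^2=0$, i.e. $u\equiv0$ outside $a^{-1}(0)=\overline\Omega$; on each $\Omega_j$ with $j\notin\Gamma$ the limiting equation reads $Lu+bu=\tilde g(x,u)\le\mu u$ with $b\ge M_0>\mu$, forcing $u\equiv0$ there as well, so $u\equiv0$ outside $\Omega_\Gamma$. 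On each $\Omega_j$ with $j\in\Gamma$, $\tilde g$ coincides with $\beta u^q+u^{2^*-1}$ and $\lambda a\equiv0$, so passing to the limit shows $u|_{\Omega_j}$ solves the limiting problem of the statement; comparing its energy with $c_j$ and using $\limsup_n c_{\lambda_n,\Gamma}\le c_\Gamma=\sum_{j\in\Gamma}c_j$ together with weak lower semicontinuity on each piece forces every inequality to be an equality, so $u|_{\Omega_j}$ is a \emph{least} energy solution, and a Brezis--Lieb/energy argument upgrades the convergence to $u_{\lambda_n}\to u$ strongly in $H_K^1(\mathbb{R}^N)$.

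It remains to show that for all large $\lambda$ the penalization is inactive along $u_\lambda$, so that $u_\lambda$ actually solves \eqref{a1} and not merely its modification; I expect this to be the main obstacle. The route is a uniform-in-$\lambda$ estimate $\|u_\lambda\|_{L^\infty(\mathbb{R}^N)}\le C$ obtained by Moser iteration on the modified equation — the weight $K$ being bounded above and below on every bounded set, the iteration is the classical one, with the critical term $u^{2^*-1}$ handled by a Brezis--Kato bootstrap — followed by the sharper fact that $\|u_\lambda\|_{L^\infty(\mathbb{R}^N\setminus\Omega_\Gamma')}\to0$ as $\lambda\to\infty$, which follows because the $L^{2^*}$-mass of $u_\lambda$ on compact subsets of $\mathbb{R}^N\setminus\Omega_\Gamma'$ tends to $0$ (a consequence of the strong convergence above and of $\lambda\int_{\mathbb{R}^N}K\,a\,u_\lambda^2\le C$). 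Once $\|u_\lambda\|_{L^\infty(\mathbb{R}^N\setminus\Omega_\Gamma')}$ lies below the threshold past which the truncation takes effect, $\tilde g(x,u_\lambda)=\beta u_\lambda^q+u_\lambda^{2^*-1}$ on all of $\mathbb{R}^N$, so $u_\lambda$ is a positive solution of \eqref{a1}; combined with the previous paragraph this exhibits the family $\{u_\lambda\}$ with precisely the stated convergence property, and the proof is complete.
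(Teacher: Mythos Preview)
Your overall architecture matches the paper's: del Pino--Felmer penalization in the weighted space, a $|\Gamma|$-parameter min--max built from the least-energy solutions $w_j$ on the $\Omega_j$, concentration-compactness below a critical-Sobolev threshold, Brezis--Kato/Moser for uniform $L^\infty$ bounds, and a $(\mathrm{PS})_\infty$ analysis for the limit. But one step does not close, and it is exactly the step the paper devotes Section~5 to.

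The gap is your claim that each $u|_{\Omega_j}$, $j\in\Gamma$, is a \emph{nontrivial} least-energy solution. You prove only $\limsup_\lambda c_{\lambda,\Gamma}\le c_\Gamma$ and a uniform positive lower bound, not $c_{\lambda,\Gamma}\to c_\Gamma$; but even granting the latter and strong convergence, so that $\sum_{j\in\Gamma} I_j(u|_{\Omega_j})=c_\Gamma$, the fact that every nonzero piece has energy $\ge c_j$ does \emph{not} force all pieces to be nonzero: some $u|_{\Omega_{j_0}}$ could vanish while another piece sits at a higher critical level absorbing the surplus $c_{j_0}$. A plain min--max at the total level cannot rule this out. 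The paper fixes this by localizing the critical point, not just the critical value: it introduces
\[
A_\mu^\lambda=\Bigl\{u:\ \|u\|_{K,\lambda,\mathbb{R}^N\setminus\Omega_\Gamma'}\le\mu,\ \ |\Phi_{\lambda,j}(u)-c_j|\le\mu\ \ \forall j\in\Gamma\Bigr\},
\]
proves a uniform-in-$\lambda$ lower bound $\|\Phi_\lambda'(u)\|\ge\sigma_0$ on the annulus $(A_{2\mu}^\lambda\setminus A_\mu^\lambda)\cap\{\Phi_\lambda\le c_\Gamma\}$ (Lemma~5.1, by contradiction via the $(\mathrm{PS})_\infty$ lemma), and then shows by a quantitative deformation that if $A_\mu^\lambda\cap\{\Phi_\lambda\le c_\Gamma\}$ contained no critical point, the min--max class could be pushed below $c_\Gamma-\varepsilon_*$ with $\varepsilon_*$ independent of $\lambda$, contradicting $b_{\lambda,\Gamma}\to c_\Gamma$ (Lemma~5.2). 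Membership in $A_\mu^\lambda$ then gives $\Phi_{\lambda,j}(u_\lambda)\to c_j$ for every $j$, hence each limit piece is a least-energy solution. The missing lower bound $b_{\lambda,\Gamma}\ge\sum_j c_{\lambda,j}\to c_\Gamma$ is obtained via a degree-theoretic linking lemma on the $|\Gamma|$-parameter family (Lemma~4.1). You should replace your energy-accounting paragraph by this localized deformation argument. A secondary point: for removing the penalization the paper does not deduce $L^\infty$ decay on $\mathbb{R}^N\setminus\Omega_\Gamma'$ from $L^{2^*}$-mass decay alone; after a uniform $L^\infty$ bound it rescales by $\lambda^{-1/2}$ around points of $\partial\Omega_\Gamma'$ and uses $C^1$-compactness of the blow-ups to force $u_\lambda\to0$ on $\partial\Omega_\Gamma'$, then a comparison for the exterior (Lemma~3.6).
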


\begin{corollary}
Under the assumptions of Theorem \ref{t1}, there exist constants $\beta^*>0$ and $\lambda^*=\lambda^*(\beta^*)$, such that for $\beta\geq \beta^*$ and $\lambda\geq \lambda^*$, equation \eqref{a1} has at least $2^k-1$ positive solutions.
\end{corollary}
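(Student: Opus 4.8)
The plan is to deduce the Corollary from Theorem \ref{t1} by a counting argument. The index set $\{1,\dots,k\}$ has exactly $2^k-1$ nonempty subsets, and by Theorem \ref{t1} each nonempty $\Gamma\subseteq\{1,\dots,k\}$ produces, for $\beta\ge\beta^*$ and all sufficiently large $\lambda$, a family $\{u_\lambda^\Gamma\}$ of positive solutions of \eqref{a1}. So the only thing to prove is that, after possibly enlarging the threshold $\lambda^*$, the $2^k-1$ solutions attached to distinct subsets are pairwise distinct.

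The separation comes from the asymptotic profile in Theorem \ref{t1}. Fix $\beta\ge\beta^*$ and two nonempty subsets $\Gamma_1\ne\Gamma_2$. I would argue by contradiction: if there were a sequence $\lambda_n\to\infty$ with $u_{\lambda_n}^{\Gamma_1}=u_{\lambda_n}^{\Gamma_2}$, then passing to a common subsequence along which both families converge strongly in $H_K^1(\mathbb{R}^N)$, the limits coincide, $u^{\Gamma_1}=u^{\Gamma_2}=:u$. By Theorem \ref{t1}, for each $i$ the limit $u^{\Gamma_i}$ vanishes outside $\Omega_{\Gamma_i}$ and its restriction to $\Omega_j$ is a least energy solution of the Dirichlet problem in the theorem — hence strictly positive on $\Omega_j$ — for every $j\in\Gamma_i$. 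Choosing $j_0\in\Gamma_1\setminus\Gamma_2$ and using that the components $\Omega_1,\dots,\Omega_k$ are pairwise disjoint, we get $u\not\equiv 0$ on $\Omega_{j_0}$ (from $\Gamma_1$) and $u\equiv 0$ on $\Omega_{j_0}$ (from $\Gamma_2$), a contradiction. Therefore there is a finite $\Lambda_{\Gamma_1,\Gamma_2}$ with $u_\lambda^{\Gamma_1}\ne u_\lambda^{\Gamma_2}$ for all $\lambda\ge\Lambda_{\Gamma_1,\Gamma_2}$.

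Finally I would set $\lambda^*$ equal to the maximum of the threshold from Theorem \ref{t1} taken over all nonempty $\Gamma$, together with the finitely many numbers $\Lambda_{\Gamma_1,\Gamma_2}$ over all pairs $\Gamma_1\ne\Gamma_2$; this maximum is finite because there are finitely many subsets. Then for every $\beta\ge\beta^*$ and $\lambda\ge\lambda^*$ the family $\{u_\lambda^\Gamma:\ \emptyset\ne\Gamma\subseteq\{1,\dots,k\}\}$ consists of $2^k-1$ pairwise distinct positive solutions of \eqref{a1}, which is the assertion. The substantive content is all in Theorem \ref{t1} — in particular the strong $H_K^1$-convergence and the nontriviality of the limit on precisely the components indexed by $\Gamma$; the only remaining (minor) obstacle is to make the distinctness uniform across the finitely many pairs, which the maximum above achieves.
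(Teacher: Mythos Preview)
Your argument is correct and is precisely the standard counting-plus-distinctness deduction that the paper leaves implicit: the Corollary is stated immediately after Theorem~\ref{t1} with no separate proof, the understanding being that the $2^k-1$ nonempty subsets $\Gamma\subset\{1,\dots,k\}$ yield solutions whose limiting profiles concentrate on pairwise different unions of components, hence are eventually distinct. One cosmetic point: when you pick $j_0\in\Gamma_1\setminus\Gamma_2$ you are tacitly assuming this set is nonempty, which may fail if $\Gamma_1\subsetneq\Gamma_2$; just note that by symmetry you may take $j_0$ in whichever of $\Gamma_1\triangle\Gamma_2$'s two halves is nonempty.
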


Furtado et al.\cite{c26} studied the equation with a nonlinear term $f(x,u)=\lambda |x|^{\beta}|u|^{q-2}u+|u|^{2^*-2}u$ in the critical growth case, where $\lambda>0$, $2\leq q<2^*$, $\beta=(\alpha-2)(2^*-q)/(2^*-2)$, and $\alpha\geq 2$. For $2<q<2^*$, Furtado et al. obtained one positive solution, and for $q=2$, they obtained a sign-changing solution. Catrina et al. also studied the case of a critical growth nonlinear term $f(x,u)=\lambda |x|^{\alpha-2}u+|u|^{2^*-1}$ in \cite{c21}, and proved the existence of at least two positive solutions. In this paper, we also consider the case of a nonlinear term with critical growth and obtain at least $2^k-1$ positive solutions.

The structure of this article consists of five parts. In Section 2, we will introduce the basic concepts and relevant lemmas. In Sections 3 and 4, we will prove the (PS) condition and the critical value of the functional. In Section 5, we will prove Theorem \ref{t1}.

\section{Preliminaries}
Define the set
\[L_K^q({\mathbb{R}^N}): = \left\{ {u:{\mathbb{R}^N} \to \mathbb{R}:\int_{{\mathbb{R}^N}} {K(x)|u{|^q}dx}  < \infty } \right\},\]
and  equip it with the following norm:
\[\:|u|_{K,q}: = {\left( {\int_{{\mathbb{R}^N}} {K(x)|u{|^q}} dx} \right)^{\frac{{\text{1}}}{q}}},\ q\in[1,\infty)\]
and
\[|u|_{K,\infty } := \mathop {{\text{ess }}\sup }\limits_{x \in {\mathbb{R}^N}} |u(x)|,\ q=\infty.\]
Therefore, the space $L_K^\infty(\mathbb{R}^N)=L^\infty(\mathbb{R}^N)$ is compatible to the other spaces(see\cite[page 880]{c27}), that is,
\[\lim_{q \to \infty } |u|_{K,q}=|u|_{K,\infty}, \ u \in L_K^1(\mathbb{R}^N)\cap L^\infty(\mathbb{R}^N).\]

We further define the spaces
\[H_K^1(\mathbb{R}^N): = \left\{ u:\mathbb{R}^N \to \mathbb{R}: \int_{\mathbb{R}^N} {K(x)(|\nabla u|^2+|u|^2)dx}   < \infty \right\}\]
and
\[H_{K,\lambda }^1(\mathbb{R}^N): = \left\{ u \in H_K^1(\mathbb{R}^N):\int_{\mathbb{R}^N} {K(x)(\lambda a(x) + b(x))u^2dx}  < \infty \right\},\]
equipped with the following norms:
\[\| u \|_K: = \left( \int_{\mathbb{R}^N} {K(x)(|\nabla u|^2+|u|^2)dx}\right)^{\frac{1}{2}},\]
\[\| u \|_{K,\lambda}: = \left( \int_{\mathbb{R}^N} {K(x)(|\nabla u|^2+(\lambda a(x)+b(x))|u|^2)dx}\right)^{\frac{1}{2}}.\]
We denote the dual space of $H_{K,\lambda }^1$ by $H_{K,\lambda }^*$, and $\langle\cdot,\cdot\rangle:H_{K,\lambda }^*\times H_{K,\lambda }^1$ represents the duality pairing. For $\lambda\geq1$, it can be observed that $\left(H_{K,\lambda }^1(\mathbb{R}^N),{\left\| \cdot \right\|_{K,\lambda}}\right)$ is a Hilbert space, and the embedding $H_{K,\lambda }^1(\mathbb{R}^N)\hookrightarrow H_K^1(\mathbb{R}^N)$ is continuous.

Let $u\in H_{K,\lambda }^1(\mathbb{R}^N)$ is a weak solution of equation \eqref{a1}, if for any $\varphi\in H_{K,\lambda }^1({\mathbb{R}^N})$, there is
\[\int_{\mathbb{R}^N} {K(x)(\nabla u\cdot\nabla \varphi + (\lambda a(x) + b(x))u\varphi)dx}  - \beta \int_{\mathbb{R}^N} K(x)u^q\varphi dx - \int_{\mathbb{R}^N} {K(x)u^{2^*-1}\varphi} =0,\]
\[\langle I^{\prime}(u),\varphi\rangle=\int_{\mathbb{R}^N} {K(x)(\nabla u\cdot\nabla \varphi + (\lambda a(x) + b(x))u\varphi)dx}  - \beta \int_{\mathbb{R}^N} K(x)u^q\varphi dx - \int_{\mathbb{R}^N} {K(x)u^{2^*-1}\varphi},\]
where
\[I(u): = \frac{1}{2}\int_{{\mathbb{R}^N}} {K(x)(|\nabla u{|^2} + (\lambda a(x) + b(x)){u^2})dx}  - \frac{\beta }{{q + 1}}\int_{{\mathbb{R}^N}} {K(x){{({u_ + })}^{q + 1}}} dx - \frac{1}{{{2^*}}}\int_{{\mathbb{R}^N}} {K(x){{({u_ + })}^{{2^*}}}} dx,\]
\[u_+(x)=\text{max}\{u(x),0\}.\]
It is easy to see that a nonnegative weak solution to the equation \eqref{a1} is the critical point of the function $I:H_{K,\lambda }^1({\mathbb{R}^N})\to\mathbb{R}$.

Similarly, for an open set $\Theta\subset\mathbb{R}^N$, we can define
\[H_{K,\lambda }^1(\Theta): = \left\{ u \in H_K^1(\Theta):\int_{\Theta} {K(x)(\lambda a(x) + b(x))u^2dx}  < \infty \right\}\]
and
\[\| u \|_{K,\lambda,\Theta}: = \left( \int_{\Theta} {K(x)(|\nabla u|^2+(\lambda a(x)+b(x))|u|^2)dx}\right)^{\frac{1}{2}}.\]
Analogously, we use $|u|_{K,q,\Theta}$ to represent the norm of the space $L_K^q(\Theta)$.
According to assumption $(a_2)$ with  \eqref{e2}, we can obtain
\[M_0|u|_{K,2,\Theta}^2\leq \int_{\Theta} {K(x)(|\nabla u|^2+(\lambda a(x)+b(x))|u|^2)dx},\ \forall\ u \in H_{K,\lambda }^1(\Theta), \lambda \geq 1,\]
which is equivalent to
\[|u|_{K,\lambda,\Theta}^2\geq M_0|u|_{K,2,\Theta}^2,\ \forall\ u \in H_{K,\lambda }^1(\Theta), \lambda \geq 1.\]

\begin{proposition}[Embedding Theorem\ \cite{c8} ]\label{p5}
For all $1<q\leq{2^*} = 2N/(N-2)$, the embedding $H_K^{1}({\mathbb{R}^N}) \hookrightarrow L_K^q({\mathbb{R}^N})$ is continuous. For all $1<q<{2^*}$, the embedding $H_K^{1}({\mathbb{R}^N}) \hookrightarrow L_K^q({\mathbb{R}^N})$ is compact.
\end{proposition}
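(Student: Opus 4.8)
The plan is to eliminate the weight $K$ by the ground-state substitution used by Escobedo and Kavian \cite{c8}, thereby reducing the weighted embedding to classical (unweighted) ones. Since $K(x)=e^{|x|^2/4}$ satisfies $\nabla K=\tfrac{x}{2}K$, for $u\in H_K^1(\mathbb{R}^N)$ the new unknown $w:=K^{1/2}u$ satisfies $\nabla u=K^{-1/2}\bigl(\nabla w-\tfrac{x}{4}w\bigr)$. Expanding the square and integrating the cross term by parts (using $\operatorname{div}x=N$, with the boundary terms removed by density of smooth compactly supported functions) gives the identity
\[
\|u\|_K^2=\int_{\mathbb{R}^N}|\nabla w|^2\,dx+\int_{\mathbb{R}^N}\Bigl(\tfrac{|x|^2}{16}+\tfrac{N}{4}+1\Bigr)w^2\,dx .
\]
Hence $\|\cdot\|_K$ is equivalent to the norm of the harmonic-oscillator space $\Sigma:=\{w\in H^1(\mathbb{R}^N):\int_{\mathbb{R}^N}|x|^2w^2\,dx<\infty\}$, and $u\mapsto w$ is an isomorphism of $H_K^1(\mathbb{R}^N)$ onto $\Sigma$. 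Because $K|u|^q=K^{1-q/2}|w|^q$, the whole proposition is reduced to the mapping properties of $\Sigma$ into the (un)weighted Lebesgue space $L^q(\mathbb{R}^N,K^{1-q/2}dx)$.

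For the continuous embedding in the range $2\le q\le 2^*$ I would argue directly. Since $K\ge 1$ and $1-q/2\le 0$, we have $K^{1-q/2}\le 1$, so $|u|_{K,q}^q\le\int_{\mathbb{R}^N}|w|^q\,dx$; the classical Gagliardo--Nirenberg--Sobolev embedding $H^1(\mathbb{R}^N)\hookrightarrow L^q(\mathbb{R}^N)$ together with the identity above then yields $|u|_{K,q}\le C\|u\|_K$, which is the assertion.

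For the compactness in the range $2\le q<2^*$, the key is that the quadratic term furnishes tightness: from the identity one gets $\tfrac1{16}\int|x|^2w^2\le\|u\|_K^2$, so for any sequence bounded in $H_K^1$ the associated $\{w_n\}\subset\Sigma$ satisfies $\int_{|x|>R}w_n^2\le \tfrac{16}{R^2}\|u_n\|_K^2\to0$ uniformly in $n$ as $R\to\infty$. Interpolating this vanishing $L^2$-tail against the uniform $L^{2^*}$ bound (itself a consequence of the continuous embedding) controls the $L^q$-tail for every $q<2^*$. On each fixed ball $B_R$ the weight $K$ is comparable to constants, so the classical Rellich--Kondrachov theorem yields strong $L^q(B_R)$ convergence of a subsequence; a diagonal argument over $R\to\infty$ then produces a subsequence converging in $L^q(\mathbb{R}^N)$, hence, once more using $K^{1-q/2}\le 1$, in $L_K^q(\mathbb{R}^N)$.

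The step I expect to be the main obstacle is the low-exponent range $1<q<2$, which is precisely where the scheme above fails. There $1-q/2>0$, so the target weight $K^{1-q/2}=e^{(2-q)|x|^2/8}$ \emph{grows} at infinity and the elementary bound $|u|_{K,q}^q\le\int|w|^q$ is no longer available; moreover the polynomial confinement $\int|x|^2w^2<\infty$ supplied by the $\Sigma$-norm does not obviously dominate an exponentially growing weight, so even the continuous embedding requires a careful, localized tail estimate and is the genuine crux of the matter. For the range $2\le q\le 2^*$ actually invoked in the sequel (the nonlinear exponents obeying $q+1\in(2,2^*)$), the remaining work is routine: substitution, Sobolev for continuity, and confinement combined with Rellich--Kondrachov for compactness.
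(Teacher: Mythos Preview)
The paper gives no proof of this proposition; it simply quotes the result from Escobedo--Kavian \cite{c8}. Your ground-state substitution $w=K^{1/2}u$ and the identity
\[
\|u\|_K^2=\int_{\mathbb{R}^N}|\nabla w|^2\,dx+\int_{\mathbb{R}^N}\Bigl(\tfrac{|x|^2}{16}+\tfrac{N}{4}+1\Bigr)w^2\,dx
\]
are precisely the device used in \cite{c8}, and your treatment of the range $2\le q\le 2^*$ (Sobolev for continuity; confinement plus Rellich--Kondrachov and interpolation for compactness when $q<2^*$) is correct and standard. So for the part of the statement that is actually used in the paper, your argument is complete and coincides with the original.

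Your instinct about the low range $1<q<2$ is sharper than you realize: the embedding $H_K^1(\mathbb{R}^N)\hookrightarrow L_K^q(\mathbb{R}^N)$ is \emph{false} there, so no ``careful, localized tail estimate'' can save it, and the proposition as stated is in error. A concrete counterexample: take $w(x)=(1+|x|^2)^{-\alpha/2}$ with any $\alpha>\tfrac{N}{2}+1$; then $w\in\Sigma$, hence $u:=K^{-1/2}w\in H_K^1(\mathbb{R}^N)$, yet
\[
\int_{\mathbb{R}^N}K|u|^q\,dx=\int_{\mathbb{R}^N}e^{(2-q)|x|^2/8}(1+|x|^2)^{-q\alpha/2}\,dx=+\infty
\]
for every $q<2$, since the polynomial decay of $|w|^q$ cannot offset the exponential growth of $K^{1-q/2}$. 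The correct range (and the one proved in \cite{c8}) is $2\le q\le 2^*$. As you note, this has no effect downstream: the paper only invokes the embedding for the exponents $q+1\in(2,2^*)$ and $2^*$.
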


\begin{proposition}[Concentration-Compactness Principle \cite{c4}]\label{l3}
Let $\{u_n\}\subset H_K^1({\mathbb{R}^N})$ be a bounded sequence such that $u_n\rightharpoonup u$ in $L_K^{2^*}(\mathbb{R}^N)$. If there exist measures $\nu$ and $\mu$, and a subsequence of $\{u_n\}$ such that $|u_n|_{K,2^*}^{2^*}\rightharpoonup\nu$ and $|\nabla u_n|_{K,2}^2\rightharpoonup\mu$, then there exist sequences $\{x_n\}\subset {\mathbb{R}^N}$ and $\{u_n\}\subset [0,\infty)$ satisfying
\[|u_n|_{K,2^*}^{2^*}\rightharpoonup|u|_{K,2^*}^{2^*}+\sum\limits_{i = 1}^\infty  {{\nu _i}{\delta _{{x_i}}}} \equiv \nu , \]
\[\sum\limits_{n = 1}^\infty  {\nu_n^{2/2^*}}<\infty, \quad \mu(x_n)\geq S\nu_n^{2/2^*},\ \forall\ n\in \mathbb{N}, \]
where $\delta_i$ is the Dirac measure and $S$ is the best Sobolev constant of the embedding $H_K^1({\mathbb{R}^N}) \hookrightarrow L_K^{2^*}({\mathbb{R}^N})$, given by
\[S:=\mathop {\inf }\limits_{x \in H_K^1({\mathbb{R}^N})\setminus\{0\}}\left\{\int_{\mathbb{R}^N} {K(x)(|\nabla u{|^2+|u|^2)}dx} \ \bigg| \int_{\mathbb{R}^N} {K(x)|u{|^{2^*}}dx}=1\right\}.\]
\end{proposition}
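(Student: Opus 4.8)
The plan is to follow P.-L. Lions' first concentration--compactness argument, adapted to the weighted Sobolev space $H_K^1(\mathbb{R}^N)$, and to exploit the fact that the weight $K(x)=e^{|x|^2/4}$ grows at infinity so that the subcritical embeddings of Proposition \ref{p5} are compact on all of $\mathbb{R}^N$, not merely locally. Throughout I read the hypotheses $|u_n|_{K,2^*}^{2^*}\rightharpoonup\nu$ and $|\nabla u_n|_{K,2}^2\rightharpoonup\mu$ as weak-$*$ convergence of the Radon measures $K|u_n|^{2^*}\,dx$ and $K|\nabla u_n|^2\,dx$. I would first reduce to the case $u=0$. Since $\{u_n\}$ is bounded in $H_K^1(\mathbb{R}^N)$, I pass to a subsequence along which $u_n\rightharpoonup u$ also weakly in $H_K^1(\mathbb{R}^N)$ (the weak $H_K^1$-limit agreeing with $u$ by uniqueness of weak $L_K^{2^*}$-limits), so $v_n:=u_n-u\rightharpoonup 0$ in $H_K^1(\mathbb{R}^N)$. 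The Brezis--Lieb lemma applied to the measure $K\,dx$ gives $K|u_n|^{2^*}\,dx - K|u|^{2^*}\,dx - K|v_n|^{2^*}\,dx\to 0$, so the weak-$*$ limit $\tilde\nu$ of $K|v_n|^{2^*}\,dx$ equals $\nu-K|u|^{2^*}\,dx$; I also extract a subsequence with $K|\nabla v_n|^2\,dx\rightharpoonup\tilde\mu$. It then suffices to prove the atomic structure for the pair $(\tilde\mu,\tilde\nu)$ and translate back.

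The heart of the argument is a reverse-H\"older inequality for $\tilde\mu$ and $\tilde\nu$. For a fixed $\phi\in C_c^\infty(\mathbb{R}^N)$ I would apply the weighted Sobolev inequality defining $S$ to the test function $\phi v_n$:
\[
S\left(\int_{\mathbb{R}^N}K(x)|\phi v_n|^{2^*}\,dx\right)^{2/2^*}\leq \int_{\mathbb{R}^N}K(x)\bigl(|\nabla(\phi v_n)|^2+|\phi v_n|^2\bigr)\,dx.
\]
Expanding $\nabla(\phi v_n)=\phi\nabla v_n+v_n\nabla\phi$ and letting $n\to\infty$, all lower-order contributions vanish: by Proposition \ref{p5} with $q=2<2^*$ the embedding $H_K^1(\mathbb{R}^N)\hookrightarrow L_K^2(\mathbb{R}^N)$ is compact, so $v_n\to 0$ strongly in $L_K^2(\mathbb{R}^N)$, which kills $\int K|\phi v_n|^2$, the cross term $\int 2K\phi v_n\,\nabla\phi\cdot\nabla v_n$ (by Cauchy--Schwarz, using that $\nabla v_n$ is bounded in $L_K^2$), and $\int K|v_n|^2|\nabla\phi|^2$. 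What survives is $\int K\phi^2|\nabla v_n|^2\to\int\phi^2\,d\tilde\mu$, while the left-hand side tends to $S(\int|\phi|^{2^*}\,d\tilde\nu)^{2/2^*}$. Hence
\[
S\left(\int_{\mathbb{R}^N}|\phi|^{2^*}\,d\tilde\nu\right)^{2/2^*}\leq \int_{\mathbb{R}^N}|\phi|^2\,d\tilde\mu,\qquad \forall\,\phi\in C_c^\infty(\mathbb{R}^N).
\]

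From this reverse-H\"older estimate I would invoke the standard measure-theoretic lemma of Lions: two nonnegative finite measures satisfying $\|\phi\|_{L^{2^*}(\tilde\nu)}\leq S^{-1/2}\|\phi\|_{L^2(\tilde\mu)}$ for all $\phi\in C_c^\infty(\mathbb{R}^N)$ force $\tilde\nu$ to be purely atomic, $\tilde\nu=\sum_i \nu_i\delta_{x_i}$, with $\tilde\mu\geq\sum_i \mu_i\delta_{x_i}$ and $\mu_i\geq S\nu_i^{2/2^*}$. Substituting $\tilde\nu=\nu-K|u|^{2^*}\,dx$ back yields the claimed decomposition $\nu=K|u|^{2^*}\,dx+\sum_i\nu_i\delta_{x_i}$. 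Since $\{u_n\}$ is bounded in $H_K^1(\mathbb{R}^N)$, $\tilde\mu$ is a finite measure, so $\sum_i\mu_i\leq\tilde\mu(\mathbb{R}^N)<\infty$, and $\mu_i\geq S\nu_i^{2/2^*}$ gives $\sum_i\nu_i^{2/2^*}\leq S^{-1}\tilde\mu(\mathbb{R}^N)<\infty$. Finally, because $K|\nabla u_n|^2\,dx-K|\nabla v_n|^2\,dx\rightharpoonup K|\nabla u|^2\,dx\geq 0$, one has $\mu\geq\tilde\mu$ as measures, so the lower bound transfers to the atoms: $\mu(\{x_i\})\geq\tilde\mu(\{x_i\})\geq S\nu_i^{2/2^*}$, which is the stated estimate.

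I expect the main obstacle to be the passage to the limit in the reverse-H\"older step, that is, justifying that every lower-order term vanishes and identifying the surviving limits with $\tilde\mu$ and $\tilde\nu$. This is precisely where the weighted structure pays off: the \emph{global} compactness of $H_K^1(\mathbb{R}^N)\hookrightarrow L_K^2(\mathbb{R}^N)$ coming from the growth of $K$ removes the usual need to localize and to rule out mass escaping to infinity, so the extra $|w|^2$ term appearing in the weighted Sobolev inequality is harmless. The remaining care is bookkeeping the weak-$*$ limits under the successive subsequence extractions and quoting the measure-theoretic lemma in a form valid on $\mathbb{R}^N$.
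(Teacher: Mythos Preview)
The paper does not prove this proposition at all: it is stated as a cited result (reference \cite{c4}) and used as a black box in the proof of Lemma \ref{p1}, with no argument given. So there is nothing in the paper to compare against.

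Your sketch is the standard Lions argument, correctly adapted to the weighted space $H_K^1(\mathbb{R}^N)$. The reduction to $u=0$ via Brezis--Lieb, the reverse-H\"older inequality obtained by testing the definition of $S$ with $\phi v_n$ and letting the lower-order terms disappear through the compact embedding $H_K^1\hookrightarrow L_K^2$ of Proposition \ref{p5}, and the invocation of Lions' measure-theoretic lemma are all sound. The one place to be slightly more careful is the claim $\mu\geq\tilde\mu$: expanding $|\nabla u_n|^2=|\nabla v_n|^2+2\nabla v_n\cdot\nabla u+|\nabla u|^2$, the cross term tested against $\psi\in C_c(\mathbb{R}^N)$ reads $\int_{\mathbb{R}^N} 2K\,\psi\,\nabla v_n\cdot\nabla u\,dx$, which vanishes because $\psi\nabla u\in L_K^2$ and $\nabla v_n\rightharpoonup 0$ there; this justifies $\mu=\tilde\mu+K|\nabla u|^2\,dx\geq\tilde\mu$ as measures, exactly as you need. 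Since the paper offers no proof, your write-up would in fact supply what the paper omits.
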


\begin{lemma}[\cite{c12}]\label{l1}
There exist constants $\delta_0, \nu_0 > 0$ with $\delta_0\approx 1$ and $\nu_0\approx 0$ such that ,  for all open sets $\Theta\subset\mathbb{R}^N$,
\begin{equation}\label{e12}
\delta_0||u||_{K,\lambda,\Theta}^2\leq||u||_{K,\lambda,\Theta}^2
-\nu_0|u|_{K,2,\Theta}^2,\ \forall\ u\in H_{K,\lambda }^1(\Theta),\lambda\geq1.
\end{equation}
\end{lemma}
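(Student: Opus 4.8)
The plan is to deduce \eqref{e12} directly from assumptions $(a_1)$ and $(a_2)$, via the coercivity estimate $\|u\|_{K,\lambda,\Theta}^2\ge M_0\,|u|_{K,2,\Theta}^2$ already recorded above; the only input needed beyond that estimate is a suitable choice of the two constants. First I would note the pointwise bound: for every $x\in\mathbb{R}^N$ and every $\lambda\ge 1$,
\[
\lambda a(x)+b(x)=\bigl(a(x)+b(x)\bigr)+(\lambda-1)a(x)\ge M_0,
\]
because $a(x)+b(x)\ge M_0$ by $(a_2)$, $a(x)\ge 0$ by $(a_1)$, and $\lambda-1\ge 0$. Multiplying by $K(x)|u(x)|^2\ge 0$ and integrating over an arbitrary open set $\Theta\subset\mathbb{R}^N$ yields
\[
\|u\|_{K,\lambda,\Theta}^2=\int_\Theta K(x)\bigl(|\nabla u|^2+(\lambda a(x)+b(x))|u|^2\bigr)dx\ \ge\ \int_\Theta K(x)|\nabla u|^2\,dx+M_0\,|u|_{K,2,\Theta}^2\ \ge\ M_0\,|u|_{K,2,\Theta}^2
\]
for all $u\in H^1_{K,\lambda}(\Theta)$ and all $\lambda\ge 1$, with $M_0$ independent of both $\lambda$ and $\Theta$.

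Next I would simply choose the constants: fix any $\delta_0\in(0,1)$ with $\delta_0$ as close to $1$ as one likes, and put $\nu_0:=(1-\delta_0)M_0>0$, which is as close to $0$ as one likes once $\delta_0$ is close enough to $1$; this already meets the requirements $\delta_0\approx 1$ and $\nu_0\approx 0$. Then, using $M_0\,|u|_{K,2,\Theta}^2\le\|u\|_{K,\lambda,\Theta}^2$ from the previous step,
\[
\|u\|_{K,\lambda,\Theta}^2-\nu_0\,|u|_{K,2,\Theta}^2=\|u\|_{K,\lambda,\Theta}^2-(1-\delta_0)\,M_0\,|u|_{K,2,\Theta}^2\ \ge\ \|u\|_{K,\lambda,\Theta}^2-(1-\delta_0)\,\|u\|_{K,\lambda,\Theta}^2=\delta_0\,\|u\|_{K,\lambda,\Theta}^2,
\]
which is exactly \eqref{e12}. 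Since $\delta_0$ and $\nu_0$ depend on nothing but $M_0$, the estimate is uniform over $\lambda\ge 1$ and over all open $\Theta$, as required.

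I do not expect a genuine obstacle in this argument; it is an elementary rearrangement. The only point that deserves attention is uniformity: one must verify that the lower bound $\lambda a(x)+b(x)\ge M_0$ holds \emph{simultaneously} for all $\lambda\ge 1$ --- this is precisely where the sign condition $a\ge 0$ in $(a_1)$ is used --- and that $M_0$ supplied by $(a_2)$ is a single global constant, so that the resulting $\delta_0,\nu_0$ are admissible in \eqref{e12} for every $\lambda$ and every $\Theta$ at once.
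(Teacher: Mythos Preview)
Your argument is correct and self-contained. In the paper this lemma is simply quoted from \cite{c12} without proof, so there is no in-paper argument to compare against; your elementary derivation from the coercivity bound $\|u\|_{K,\lambda,\Theta}^2\ge M_0|u|_{K,2,\Theta}^2$ (which the paper records just before the lemma) together with the choice $\nu_0=(1-\delta_0)M_0$ is exactly the natural way to establish it, and in fact supplies what the paper omits.
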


\section{(PS) Condition and Research on Energy Levels}

In this section, we adapt some argumentation approaches of Pino and Felmer \cite{c7}, Ding and Tanaka \cite{c12}, and Claudianor et al. \cite{c13} to prove several lemmas.

Let us define a function $h:\mathbb{R}\to\mathbb{R}$ as follows:
\[h(t) = \left\{
\begin{array}{ll}
\beta {t^q} + {t^{{2^*} - 1}}, & \quad t \geq 0, \\
0, & \quad t \leq 0,
\end{array}
\right.\]
and fix a positive constant $a$ verifying $h(a)/a=\nu_0$, where $\nu_0>0$ is the constant provided in Lemma \ref{l1}.
Additionally, we introduce two functions $f$ and $F:\mathbb{R}\to\mathbb{R}$, which play vital roles in the subsequent content.
\[f(t) = \left\{ \begin{gathered}
  0,t \leq 0, \hfill \\
  h(t),t \in [0,a], \hfill \\
  \nu_0t,t \geq a, \hfill \\
\end{gathered}  \right.\]
\[F(t) = \int_0^t {f(\tau )d\tau }  = \left\{ \begin{gathered}
  0,t \leq0, \hfill \\
  \frac{\beta }{{q + 1}}{t^{q + 1}} + \frac{1}{{{2^*}}}{t^{{2^*}}},t \in [0,a], \hfill \\
  \frac{\beta }{{q + 1}}{a^{q + 1}} + \frac{1}{{{2^*}}}{a^{{2^*}}} + \frac{1}{2}{\nu _0}({t^2} - {a^2}),t \geq a. \hfill \\
\end{gathered}  \right.\]
Using the set \ ${\Omega _\Gamma^{'} }$, we consider the function
\[\chi_\Gamma(x) = \left\{ \begin{gathered}
  1,x\in {\Omega _\Gamma^{'} }, \hfill \\
  0,x\notin {\Omega _\Gamma^{'} }, \hfill \\
\end{gathered}  \right.\]
\[g(x,t)=\chi_\Gamma(x)h(t)+(1-\chi_\Gamma(x))f(t),\]
\[G(x,t) = \int_0^t {g(x,\tau)d\tau}= \chi_\Gamma(x)H(t)+(1-\chi_\Gamma(x))F(t),\]
where
\[H(t) = \int_0^t {h(\tau )d\tau }. \]
We use  $\Phi_\lambda:H_{K,\lambda }^1(\mathbb{R}^N)\to\mathbb{R}$ to present that
\[\Phi_\lambda(u)= \frac{1}{2}\int_{{\mathbb{R}^N}} {K(x)(|\nabla u{|^2} + (\lambda a(x) + b(x)){u^2})dx}  - \int_{{\mathbb{R}^N}} {K(x)G(x,u) dx} .\]
It is easy to know that  $\Phi_\lambda\in C^1(H_{K,\lambda }^1(\mathbb{R}^N),\mathbb{R})$, the critical point of $\Phi_\lambda$ is a nonnegative weak solution to the following equation,
\begin{equation}\label{e4}
 - \Delta u - \frac{1}{2}(x \cdot \nabla u) + (\lambda a(x)+b(x))u = g(x,u).
\end{equation}
Note that the positive solution of the above equation is related to the positive solution of equation \eqref{a1}. If $u\in H_{K,\lambda }^1(\mathbb{R}^N)\to\mathbb{R}$ is a positive solution of equation \eqref{e4}, then it can be verified that $u(x)\leq a$  in $\mathbb{R}^N\setminus {\Omega _\Gamma^{'} }$ is a positive solution of equation \eqref{a1}.

\begin{remark}
Based on the definitions of $f$ and $F$, we assume that the (PS) sequences are nonnegative.
\end{remark}

\begin{lemma}\label{l4}\label{l5}
For $\lambda\geq1$, any (PS) sequence $\{u_n\} \subset H_{K,\lambda }^1(\mathbb{R}^N)$ on the functional $\Phi_\lambda$ is uniformly bounded, i.e.,  there exists  constant $m(c)$ and $M(c)$ that is independent of $\lambda\geq 1$, such that
\[m(c)\leq\mathop {\lim }\limits_{n \to \infty } \inf ||u_n||_{K,\lambda}^2\leq\mathop {\lim }\limits_{n \to \infty } \sup ||u_n||_{K,\lambda}^2\leq M(c).\]
Moreover, if $c>0$, then $m(c)>0$.
\end{lemma}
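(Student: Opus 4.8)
The plan is to exploit the standard structure of such a problem: a (PS) sequence $\{u_n\}$ satisfies $\Phi_\lambda(u_n)\to c$ and $\Phi_\lambda'(u_n)\to 0$ in $H_{K,\lambda}^*$, and by the previous Remark we may assume $u_n\geq 0$. The key is to combine the energy identity with the derivative identity tested against $u_n$ itself, namely to estimate $\Phi_\lambda(u_n)-\frac{1}{q+1}\langle\Phi_\lambda'(u_n),u_n\rangle$ from below by a positive multiple of $\|u_n\|_{K,\lambda}^2$. This uses the algebraic facts that on $\Omega_\Gamma'$ the nonlinearity is $h$, so $\frac{1}{2}tH(t)$-type terms are superquadratic, and that off $\Omega_\Gamma'$ the modified nonlinearity $f$ satisfies the crucial one-sided inequality $f(t)t\leq \nu_0 t^2$ and $F(t)\leq\frac{1}{2}\nu_0 t^2$ (both following directly from the definition of $f$ together with $h(a)/a=\nu_0$ and the convexity of $h$), which is exactly what Lemma \ref{l1} was built to absorb.

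The upper bound is the first main step. Writing $q+1<2<2^*$, one has for every $t\geq 0$
\[
H(t)-\frac{1}{q+1}h(t)t = \Big(\frac{1}{q+1}-\frac{1}{2^*}\Big)(-1)\,t^{2^*}\cdot(\text{sign reversed})\le 0,
\]
so that the $\Omega_\Gamma'$-contribution to $\Phi_\lambda(u_n)-\frac{1}{q+1}\langle\Phi_\lambda'(u_n),u_n\rangle$ is bounded below by a nonnegative multiple of $\int_{\Omega_\Gamma'}K|\nabla u_n|^2$ plus a nonnegative multiple of the mass term; for the region off $\Omega_\Gamma'$, the estimate $F(t)\leq\frac{1}{2}\nu_0t^2$ and $f(t)t\leq\nu_0t^2$ give that the corresponding contribution dominates $\big(\frac12-\frac1{q+1}\big)\int(|\nabla u_n|^2+(\lambda a+b)u_n^2) - C\int \nu_0 u_n^2$, and Lemma \ref{l1} (applied with $\Theta=\mathbb{R}^N$) converts the leftover $-\nu_0|u_n|_{K,2}^2$ into $-\,(1-\delta_0)\|u_n\|_{K,\lambda}^2$, with $\delta_0\approx 1$ chosen so the net coefficient of $\|u_n\|_{K,\lambda}^2$ stays strictly positive. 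Since the left-hand side is $c+o(1)+o(\|u_n\|_{K,\lambda})$, one gets $\|u_n\|_{K,\lambda}^2\le C(1+\|u_n\|_{K,\lambda})$, hence $\limsup_n\|u_n\|_{K,\lambda}^2\le M(c)$ with $M(c)$ depending only on $c$ (the constants $\delta_0,\nu_0,q,M_0$ being absolute), in particular independent of $\lambda\geq 1$.

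For the lower bound when $c>0$: since $\Phi_\lambda(u_n)\to c>0$ and $G(x,t)\le \frac{\beta}{q+1}t^{q+1}+\frac{1}{2^*}t^{2^*}+\frac12\nu_0 t^2$ pointwise, the Sobolev embedding of Proposition \ref{p5} together with the continuous embedding $H_{K,\lambda}^1\hookrightarrow H_K^1$ for $\lambda\ge1$ gives
\[
c+o(1)=\Phi_\lambda(u_n)\le \tfrac12\|u_n\|_{K,\lambda}^2 + C_1\|u_n\|_{K,\lambda}^{q+1}+C_2\|u_n\|_{K,\lambda}^{2^*},
\]
so a sequence with $\|u_n\|_{K,\lambda}\to 0$ would force $c\le 0$, a contradiction; thus $\liminf_n\|u_n\|_{K,\lambda}^2\ge m(c)>0$. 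When $c\ge0$ one simply takes $m(c)=0$, which holds trivially. The main obstacle is the bookkeeping in the upper bound: one must keep every constant that enters (especially the interplay between $\delta_0$, $\nu_0$, and the superquadratic gap $\frac12-\frac1{q+1}$) explicitly independent of $\lambda$, and must handle the $o(\|u_n\|_{K,\lambda})$ term from $\langle\Phi_\lambda'(u_n),u_n\rangle$ carefully — this is the one place a careless estimate could smuggle in $\lambda$-dependence. Everything else is routine once the sign inequalities for $h$, $f$, $F$ are recorded.
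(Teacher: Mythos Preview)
Your upper-bound argument is exactly the paper's: form $\Phi_\lambda(u_n)-\tfrac{1}{q+1}\langle\Phi_\lambda'(u_n),u_n\rangle$, drop the $\Omega_\Gamma'$-contribution (which has the favorable sign since $H(t)-\tfrac{1}{q+1}h(t)t=(\tfrac{1}{2^*}-\tfrac{1}{q+1})t^{2^*}\le 0$), bound the $F$-piece by $(\tfrac12-\tfrac{1}{q+1})\nu_0 t^2$, and invoke Lemma~\ref{l1} to get $\delta_0(\tfrac12-\tfrac{1}{q+1})\|u_n\|_{K,\lambda}^2\le c+o(1)+\varepsilon_n\|u_n\|_{K,\lambda}$, hence $M(c)=\delta_0^{-1}(\tfrac12-\tfrac{1}{q+1})^{-1}c$. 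Two small slips to fix: you wrote ``$q+1<2<2^*$'' when it is $2<q+1<2^*$, and your displayed formula for $H(t)-\tfrac{1}{q+1}h(t)t$ is garbled; just write $(\tfrac{1}{2^*}-\tfrac{1}{q+1})t^{2^*}\le 0$.

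For the lower bound you take a genuinely different, more elementary route than the paper. The paper forms the second linear combination $\Phi_\lambda(u_n)-\tfrac{1}{2^*}\langle\Phi_\lambda'(u_n),u_n\rangle$, observes that both $H(t)-\tfrac{1}{2^*}h(t)t\ge 0$ and $F(t)-\tfrac{1}{2^*}f(t)t\ge 0$, and reads off the explicit constant $m(c)=(\tfrac12-\tfrac{1}{2^*})^{-1}c$. Your argument instead uses only that $G\ge 0$, so $c+o(1)=\Phi_\lambda(u_n)\le \tfrac12\|u_n\|_{K,\lambda}^2$, which already gives $m(c)=2c$; the extra $C_1\|u_n\|^{q+1}+C_2\|u_n\|^{2^*}$ terms you add are harmless but superfluous, and your remark about bounding $G$ \emph{from above} is irrelevant here (it points the wrong way). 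Either approach yields a $\lambda$-independent $m(c)>0$; the paper's gives a slightly sharper explicit constant, yours avoids the second linear combination entirely.
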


\begin{proof}
Let $\{u_n\}\subset H_{K,\lambda }^1(\mathbb{R}^N)$ be a $\text{(PS)}_c$ sequence, then we have
\[\Phi_\lambda(u_n)\to c,\quad \Phi_\lambda^{\prime}(u_n)\to 0.\]
For $n$ sufficiently large, by the above expression, we have
\[\Phi_\lambda(u_n)-\frac{1}{{q + 1}}\langle \Phi_\lambda^{\prime}(u_n),u_n\rangle=c+o(1)
+\varepsilon_n||u_n||_{K,\lambda},\]
where $\varepsilon_n\to 0.$ Therefore,
\begin{equation}\label{e31}
\left(\frac{1}{2}-\frac{1}{{q + 1}}\right)||u_n||_{K,\lambda}^2-\int_{\mathbb{R}^N\setminus\Omega_\Gamma^{'}} {K(x)\left[F(u_n)-\frac{1}{{q + 1}}f(u_n)u_n\right] dx}=c+o(1)+\varepsilon_n||u_n||_{K,\lambda}.
\end{equation}
We note that
\[F(t) - \frac{1}{{q + 1}}f(t)t = \left\{ {\begin{array}{*{20}{l}}
{0,t \le 0,}\\
{ \left( {\frac{1}{{{2^*}}} - \frac{1}{{q + 1}}} \right){t^{{2^*}}},t \in [0,a],}\\
{\frac{\beta}{{q + 1}} {a^{q + 1}} + \frac{1}{{{2^*}}}{a^{{2^*}}} + \left( {\frac{1}{2} - \frac{1}{{q + 1}}} \right){\nu _0}{t^2} - \frac{1}{2}{\nu _0}{a^2},t \ge a.}
\end{array}} \right.\]
Hence,
\[F(t)-\frac{1}{{q + 1}}f(t)t\leq\left(\frac{1}{2}- \frac{1}{{q + 1}}\right)\nu_0(t^2-a^2)\leq\left(\frac{1}{2}- \frac{1}{{q + 1}}\right)\nu_0t^2, t \in \mathbb{R},\]
and we have
\[\left(\frac{1}{2}-\frac{1}{{q + 1}}\right)\left(||u_n||_{K,\lambda}^2
-\nu_0|u_n|_{K,2}^2\right)\leq c+o(1)
+\varepsilon_n||u_n||_{K,\lambda}.\]
Using Lemma \ref{l1}, we have
\[\delta_0\left(\frac{1}{2}-\frac{1}{{q + 1}}\right)||u_n||_{K,\lambda}^2
\leq c+o(1)+\varepsilon_n||u_n||_{K,\lambda}.\]
Thus, $||u_n||_{K,\lambda}$ is bounded as $n\to \infty$ and
\[\mathop {\lim }\limits_{n \to \infty } \sup ||u_n||_{K,\lambda}^2\leq M(c):=\left(\frac{1}{2}-\frac{1}{{q + 1}}\right)^{-1}\delta_0^{-1}c.\]
On the other hand, it follows from \eqref{e31} that
\[\left(\frac{1}{2}-\frac{1}{{2^*}}\right)||u_n||_{K,\lambda}^2-\int_{\mathbb{R}^N\setminus\Omega_\Gamma^{'}} {K(x)\left[F(u_n)-\frac{1}{{2^*}}f(u_n)u_n\right] dx}>c+o(1)+\varepsilon_n||u_n||_{K,\lambda},\]
so
\[\left(\frac{1}{2}-\frac{1}{{2^*}}\right)||u_n||_{K,\lambda}^2>c+o(1)+\varepsilon_n||u_n||_{K,\lambda},\]
\[\mathop {\lim }\limits_{n \to \infty } \inf ||u_n||_{K,\lambda}^2\geq m(c):=\left(\frac{1}{2}-\frac{1}{{2^*}}\right)^{-1}c.\]
This shows that $\{u_n\}$ is uniformly bounded in $H_{K,\lambda }^1(\mathbb{R}^N)$.
\end{proof}

Next, for each fixed $j\in\Gamma$, we denote by  $c_j$ the minimax level of the mountain-pass theorem associated with the  function \ $I_j:H_K^1(\Omega_j)\to \mathbb{R}$, given by
\begin{equation}\label{e21}
I_j(u) = \frac{1}{2}\int_{\Omega_j} {K(x)(|\nabla u{|^2} + b(x){u^2})dx} - \frac{\beta }{{q + 1}}\int_{\Omega_j} {K(x){{({u_ + })}^{q + 1}}} dx - \frac{1}{{{2^*}}}\int_{\Omega_j} {K(x){{({u_ + })}^{{2^*}}}} dx.
\end{equation}
It can be seen that the critical points of $I_j$ are weak solutions to the following problem:
\begin{equation}\label{e27}
\left\{ \begin{gathered}
- \Delta u - \frac{1}{2}(x \cdot \nabla u) + b(x)u = \beta {u^q} + {u^{2^* - 1}}, \quad \text{in}\ \Omega_j,\hfill \\
u > 0, \quad \text{in}\ \Omega_j, \hfill \\
u = 0, \quad \text{on}\ \partial \Omega_j. \hfill \\
\end{gathered} \right.
\end{equation}

\begin{lemma}\label{l2}
There exists ${\beta ^*} > 0$ such that for any $\beta\geq\beta^*$, we have
\[{c_j} \in \left( {0,\left( {\frac{1}{2} - \frac{1}{{q + 1}}} \right)\frac{{{S^{N/2}}}}{{k + 1}}} \right),\ \forall \ j \in \{ 1, \cdots ,k\} .\]
\end{lemma}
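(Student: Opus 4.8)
The plan is to bound $c_j$ from below and from above by two unrelated mechanisms. The lower bound $c_j>0$ is simply the mountain-pass geometry of $I_j$, which is available for every fixed $\beta>0$. The upper bound is obtained by letting $\beta$ be large: the subcritical term $\beta u^q$ makes the energy of $I_j$ along every ray through the origin small, so $c_j\to 0$ as $\beta\to\infty$, and it suffices to push $c_j$ below the fixed positive number $(\tfrac12-\tfrac1{q+1})\tfrac{S^{N/2}}{k+1}$ simultaneously for the finitely many indices $j$. The only point requiring a little care — the nearest thing to an obstacle — is the variational estimate $c_j\le\max_{t\ge0}I_j(t\phi)$ for a fixed test function $\phi$; this rests on checking that $I_j$ indeed has mountain-pass geometry and that rays through admissible functions eventually leave the ``valley'', both of which follow from the positivity of $b$ on $\Omega_j$ and from the super-quadratic growth of $t\mapsto\beta t^q+t^{2^*-1}$, so no serious analytic difficulty is involved.

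For $c_j>0$: because $\Omega_j\subset\Omega\subset a^{-1}(0)$, assumption $(a_2)$ forces $b(x)\ge M_0>0$ on $\Omega_j$, hence the quadratic form $u\mapsto\int_{\Omega_j}K(|\nabla u|^2+b(x)u^2)\,dx$ is equivalent to $\|u\|_K^2$ on $H_K^1(\Omega_j)$. Since $\Omega_j$ is bounded with smooth boundary and $K$ is continuous and bounded away from $0$ and $\infty$ on $\overline{\Omega_j}$, the embeddings $H_K^1(\Omega_j)\hookrightarrow L_K^{q+1}(\Omega_j)$ and $H_K^1(\Omega_j)\hookrightarrow L_K^{2^*}(\Omega_j)$ are continuous (Proposition \ref{p5}). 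Using $2<q+1<2^*$ one gets, for $\|u\|$ small, $I_j(u)\ge C_1\|u\|^2-C_2\|u\|^{q+1}-C_3\|u\|^{2^*}>0$, so $I_j\ge\alpha_\beta>0$ on a small sphere, while $I_j(0)=0$ and $I_j(t\phi)\to-\infty$ as $t\to+\infty$ for any fixed $0\le\phi\in H_K^1(\Omega_j)\setminus\{0\}$. Hence $I_j$ has the mountain-pass geometry and $c_j\ge\alpha_\beta>0$.

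For the upper bound, fix $\phi_j\in C_c^\infty(\Omega_j)$ with $\phi_j\ge0$, $\phi_j\not\equiv0$, and set $A_j=\int_{\Omega_j}K(|\nabla\phi_j|^2+b\phi_j^2)\,dx>0$ and $B_j=\int_{\Omega_j}K\phi_j^{q+1}\,dx>0$. Choosing $T$ so large that $I_j(T\phi_j)<0$, the path $s\mapsto sT\phi_j$, $s\in[0,1]$, belongs to the mountain-pass class of $I_j$, and since $t\mapsto I_j(t\phi_j)$ first rises and then falls to $-\infty$ we obtain
\[
c_j\le\max_{t\ge0}I_j(t\phi_j)\le\max_{t\ge0}\Big(\tfrac{t^2}{2}A_j-\tfrac{\beta}{q+1}t^{q+1}B_j\Big),
\]
where the nonpositive term $-\tfrac{1}{2^*}\int_{\Omega_j}K\phi_j^{2^*}\,dx$ has been dropped. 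The right-hand side is maximised at the point $t$ with $t^{q-1}=A_j/(\beta B_j)$, and a direct computation yields
\[
c_j\le D_j\,\beta^{-2/(q-1)},\qquad D_j:=\Big(\tfrac12-\tfrac1{q+1}\Big)A_j^{\,1+2/(q-1)}B_j^{-2/(q-1)},
\]
with $D_j$ independent of $\beta$. As $q>1$, the exponent $2/(q-1)$ is positive, so $D_j\beta^{-2/(q-1)}\to0$ as $\beta\to\infty$; since $j$ ranges over the finite set $\{1,\dots,k\}$, I may choose $\beta^*>0$ with $\max_{1\le j\le k}D_j(\beta^*)^{-2/(q-1)}<(\tfrac12-\tfrac1{q+1})\tfrac{S^{N/2}}{k+1}$. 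Then for all $\beta\ge\beta^*$ and all $j\in\{1,\dots,k\}$ one has $0<c_j\le D_j\beta^{-2/(q-1)}\le D_j(\beta^*)^{-2/(q-1)}<(\tfrac12-\tfrac1{q+1})\tfrac{S^{N/2}}{k+1}$, which is the claim.
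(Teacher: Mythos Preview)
Your proof is correct and follows essentially the same approach as the paper: bound $c_j$ above by $\max_{t\ge0}I_j(t\phi_j)$ for a fixed nonnegative test function and show this maximum tends to $0$ as $\beta\to\infty$, then use the finiteness of the index set. The differences are cosmetic: the paper bounds the maximizer $t_{\beta,j}$ via the first-order condition and infers $I_j(t_{\beta,j}\varphi_j)\to0$, whereas you drop the (nonpositive) critical term and compute the resulting one-variable maximum explicitly to get the quantitative rate $c_j\le D_j\beta^{-2/(q-1)}$; you also spell out the mountain-pass lower bound $c_j>0$, which the paper leaves implicit.
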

\begin{proof}
For any $j \in \{ 1, \cdots ,k\}$, we fix a nonnegative function $\varphi_j\in H_K^1(\Omega_j)\setminus\{0\}$. We note that there exists $t_{\beta,j}\in(0,+\infty)$ such that
\[c_j\leq I_j(t_{\beta,j}\varphi_j)=\mathop {\max }\limits_{t \geq 0} {I_j}(t{\varphi _j}).\]
Therefore, the following equation holds:
\[\int_{\Omega_j} {K(x)(|\nabla \varphi_j{|^2}  + b(x){|\varphi_j|^2})dx} = \beta t_{\beta,j}^{q-1}\int_{\Omega_j} {K(x){{\varphi_j}^{q + 1}}} dx +t_{\beta,j}^{2^*-2} \int_{\Omega_j} {K(x){{\varphi_j}^{{2^*}}}} dx .\]
Above equation implies that
\[{t_{\beta ,j}} \leq {\left[ {\frac{\int_{\Omega_j} {K(x)(|\nabla \varphi_j{|^2}  + b(x){|\varphi_j|^2})dx}}{{\beta \int_{{\Omega _j}} {K(x){\varphi _j}^{q + 1}} dx}}} \right]^{1/(q - 1)}},\]
\[t_{\beta ,j}\to 0, \ \beta \to +\infty.\]
Using the above limits, we have
\[I_j(t_{\beta,j}\varphi_j)\to 0,\ \beta \to +\infty. \]
Thus, it can be seen that there exists $\beta^*>0$ such that
\[{c_j} < \left( {\frac{1}{2} - \frac{1}{{q + 1}}} \right)\frac{{{S^{N/2}}}}{{k + 1}},\ \forall \ j \in \{ 1, \cdots ,k\},\ \forall\ \beta \in [\beta^*,+\infty) .\]
\end{proof}

\begin{remark}
In particular, the above lemma implies that
\begin{equation}\label{e23}
\sum\limits_{j = 1}^k  {c_j\in \left( {0,\left( {\frac{1}{2} - \frac{1}{{q + 1}}} \right)S^{N/2}} \right)}.
\end{equation}
\end{remark}

\begin{lemma}\label{p1}
For each $\lambda  \geq 1$ and $c \in \left( {0,\left( {\frac{1}{2} - \frac{1}{q+1}} \right){S^{N/2}}} \right)$, any $(\rm{PS})_c$ sequence $\{u_n\}\subset H_{K,\lambda }^1(\mathbb{R}^N)$ on the functional $\Phi_\lambda$ has a strongly convergent subsequence in $H_{K,\lambda }^1(\mathbb{R}^N)$.
\end{lemma}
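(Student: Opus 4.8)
The plan is to run the Brezis--Nirenberg compactness scheme in the weighted space $H_{K,\lambda}^1(\mathbb{R}^N)$, using the concentration--compactness principle (Proposition~\ref{l3}) to localise the loss of compactness at the critical exponent and the upper bound on $c$ to rule out concentration. \emph{Step 1 (weak limit and limit equation).} Let $\{u_n\}$ be a $(\mathrm{PS})_c$ sequence, which by the Remark we take nonnegative. By Lemma~\ref{l5} it is bounded in $H_{K,\lambda}^1(\mathbb{R}^N)$, hence in $H_K^1(\mathbb{R}^N)$ for $\lambda\ge1$, so up to a subsequence $u_n\rightharpoonup u$ in $H_{K,\lambda}^1$, $u_n\to u$ in $L_K^p(\mathbb{R}^N)$ for all $p\in[1,2^*)$ by the compact embedding of Proposition~\ref{p5}, and $u_n\to u$ a.e.\ (so $u\ge0$). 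Passing to the limit in $\langle\Phi_\lambda'(u_n),\varphi\rangle\to0$ for each $\varphi\in H_{K,\lambda}^1$ — the quadratic-form terms converge by weak convergence, the terms coming from $\beta t^q$ and from $f$ by the strong $L_K^p$ convergence, and $\chi_\Gamma u_n^{2^*-1}$ converges weakly in $L_K^{(2^*)'}$ since it is bounded there and converges a.e.\ — shows that $\Phi_\lambda'(u)=0$.

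\emph{Step 2 (no concentration).} Apply Proposition~\ref{l3} to $\{u_n\}$: there exist points $x_i$, masses $\nu_i\ge0$, and a measure $\mu$ with $\nu=|u|_{K,2^*}^{2^*}+\sum_i\nu_i\delta_{x_i}$ and $\mu(\{x_i\})\ge S\nu_i^{2/2^*}$. Fix $i$, let $\psi_\varepsilon(x)=\psi((x-x_i)/\varepsilon)$ with $\psi\in C_c^\infty(B_2(0))$, $\psi\equiv1$ on $B_1(0)$, $0\le\psi\le1$, and test $\Phi_\lambda'(u_n)$ with the (for fixed $\varepsilon$) bounded sequence $\psi_\varepsilon u_n$. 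Using the elementary inequality $g(x,t)t\le h(t)t=\beta t^{q+1}+t^{2^*}$ for $t\ge0$ — valid because $f(t)\le h(t)$ on $[0,\infty)$ by the choice of $a$, which makes $h(t)/t\ge\nu_0$ for $t\ge a$ — and letting first $n\to\infty$ (the weighted Dirichlet term tends to $\int\psi_\varepsilon\,d\mu$, the critical term to $\int\psi_\varepsilon\,d\nu$, the rest converges by compactness) and then $\varepsilon\to0$ (the gradient-coupling term $\int K u_n\nabla u_n\cdot\nabla\psi_\varepsilon$ vanishes by the usual annulus estimate, which works because $N(1-2/2^*)=2$, and $\int K(\lambda a+b)\psi_\varepsilon u_n^2$, $\int K\beta u_n^{q+1}\psi_\varepsilon$ vanish since $u_n\to u$ in $L_K^2\cap L_K^{q+1}$ and the supports shrink to $\{x_i\}$), one obtains $\mu(\{x_i\})\le\nu_i$. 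Together with $\mu(\{x_i\})\ge S\nu_i^{2/2^*}$ this forces $\nu_i=0$ or $\nu_i\ge S^{N/2}$. To exclude $\nu_i\ge S^{N/2}$, I use the energy level: from $\Phi_\lambda(u_n)\to c$ and $\langle\Phi_\lambda'(u_n),u_n\rangle\to0$,
\[c=\lim_n\Bigl[\bigl(\tfrac12-\tfrac1{2^*}\bigr)\|u_n\|_{K,\lambda}^2-\int_{\mathbb{R}^N}K\bigl(G(x,u_n)-\tfrac1{2^*}g(x,u_n)u_n\bigr)\,dx\Bigr],\]
where the integrand $G(x,t)-\tfrac1{2^*}g(x,t)t$ is nonnegative with growth at most $t^{q+1}+t^2$ (the critical powers cancel), hence passes to the limit along the compact embeddings; so $\lim_n\|u_n\|_{K,\lambda}^2$ exists. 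Combining $\|u_n\|_{K,\lambda}^2=\|u\|_{K,\lambda}^2+\|u_n-u\|_{K,\lambda}^2+o(1)$, the identity $\Phi_\lambda(u)=\int_{\mathbb{R}^N}K\bigl(\tfrac12g(x,u)u-G(x,u)\bigr)\ge0$ (from $\Phi_\lambda'(u)=0$ and the pointwise bound $\tfrac12g(x,t)t\ge G(x,t)$), and Proposition~\ref{l3} applied to $u_n-u\rightharpoonup0$ (whose concentration atoms are again $\nu_i$ by Brezis--Lieb), one finds that $\nu_i\ge S^{N/2}$ would give
\[c\ \ge\ \bigl(\tfrac12-\tfrac1{2^*}\bigr)\lim_n\|u_n-u\|_{K,\lambda}^2\ \ge\ \bigl(\tfrac12-\tfrac1{2^*}\bigr)S\nu_i^{2/2^*}\ \ge\ \tfrac1N S^{N/2},\]
contradicting $c<(\tfrac12-\tfrac1{q+1})S^{N/2}<\tfrac1N S^{N/2}$ (the last inequality since $q+1<2^*$). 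Hence $\nu_i=0$ for every $i$.

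\emph{Step 3 (strong convergence).} The critical nonlinearity is supported in the bounded set $\Omega_\Gamma'$, and by Step~2 no mass concentrates, so the restriction of $K|u_n|^{2^*}dx$ to the compact set $\overline{\Omega_\Gamma'}$ converges weakly-$*$ to $K|u|^{2^*}dx$ with total masses converging; thus $u_n\to u$ in $L_K^{2^*}(\Omega_\Gamma')$ (weak convergence plus norm convergence in a uniformly convex space). Next, $\langle\Phi_\lambda'(u_n),u_n-u\rangle\to0$ because $\Phi_\lambda'(u_n)\to0$ and $\{u_n-u\}$ is bounded; in this pairing the terms from $\beta t^q$ and $f$ vanish by the strong $L_K^p$ convergence and $\int_{\Omega_\Gamma'}Ku_n^{2^*-1}(u_n-u)$ vanishes by the $L_K^{2^*}(\Omega_\Gamma')$ convergence, leaving $\langle u_n,u_n-u\rangle_{K,\lambda}\to0$, whence $\|u_n-u\|_{K,\lambda}^2\to0$ since $\langle u,u_n-u\rangle_{K,\lambda}\to0$. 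This is the claimed strong convergence.

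\emph{Main obstacle.} The crux is Step~2: organising the localisation so that the weighted gradient-coupling term and the truncated, $x$-dependent nonlinearity $g$ are controlled uniformly in $\lambda$ — the bound $g(x,t)t\le h(t)t$ is exactly what makes the argument insensitive to where $x_i$ sits relative to $\partial\Omega_\Gamma'$ — and then matching the resulting energy lower bound to the threshold $(\tfrac12-\tfrac1{q+1})S^{N/2}$, which lies safely below the natural Brezis--Nirenberg level $\tfrac1N S^{N/2}$. Steps~1 and~3 are routine once Propositions~\ref{p5} and~\ref{l3} are in hand.
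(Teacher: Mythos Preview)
Your argument is correct and follows the same overall architecture as the paper's proof --- boundedness, concentration--compactness to kill the critical defect, then strong convergence --- but two technical choices differ and are worth recording.

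\emph{Exclusion of concentration.} The paper combines $\Phi_\lambda(u_n)-\tfrac{1}{q+1}\langle\Phi_\lambda'(u_n),u_n\rangle$ directly, obtaining $(\tfrac12-\tfrac1{q+1})\int_{\mathbb{R}^N}K|\nabla u_n|^2\le c+o(1)$ and hence $(\tfrac12-\tfrac1{q+1})\mu(\{x_i\})\le c$; combined with $\mu(\{x_i\})\ge S\nu_i^{2/2^*}$ and $\nu_i\ge S^{N/2}$ this hits the stated threshold $(\tfrac12-\tfrac1{q+1})S^{N/2}$ exactly. You instead use the $\tfrac1{2^*}$ combination together with Brezis--Lieb splitting and $\Phi_\lambda(u)\ge0$ to reach the weaker level $\tfrac1N S^{N/2}$, which still suffices because $q+1<2^*$. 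The paper's route is shorter and explains why the particular constant in the hypothesis appears; yours is the standard Brezis--Nirenberg bookkeeping and requires checking $\tfrac12 g(x,t)t\ge G(x,t)$ (which does hold, as you can verify from $h(a)=\nu_0 a$).

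\emph{Tightness at infinity.} Before invoking concentration--compactness, the paper inserts a separate step (equation~\eqref{e8}): testing with $\eta u_n$ for a cut--off $\eta$ supported in $\{|x|\ge R/2\}$ to show that the full $H^1_{K,\lambda}$--norm of $u_n$ outside large balls is uniformly small, and then combines this with $u_n\to u$ in $L^{2^*}_{K,\mathrm{loc}}$ to pass to the limit in $\int_{\mathbb{R}^N}Kg(x,u_n)u_n$. You bypass this entirely by observing that the only critical piece of $g$ lives in the bounded set $\Omega_\Gamma'$, so local $L^{2^*}_K$--convergence on $\overline{\Omega_\Gamma'}$ (which follows from $\nu_i=0$ plus norm convergence there) already handles the critical term, while the remaining pieces are subcritical and covered by Proposition~\ref{p5}. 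This is a genuine simplification specific to the penalised functional $\Phi_\lambda$.
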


\begin{proof}
Let $\{u_n\}\subset H_{K,\lambda }^1(\mathbb{R}^N)$ be a $\text{(PS)}_c$ sequence. According to Lemma $\ref{l5}$, we know that the sequence $\{u_n\}$ is bounded in $H_{K,\lambda }^1(\mathbb{R}^N)$. Therefore, we can assume that
\[u_n \rightharpoonup u \quad \text{in}\ H_{K,\lambda}^1(\mathbb{R}^N) \ \text{and} \ H_{K}^1(\mathbb{R}^N),\]
\[u_n \to u \quad \text{in}\ L_{K}^p(\mathbb{R}^N),\forall\ p\in[2,2^*).\]
Since $\{u_n\}$ is a bounded $\text{(PS)}_c$ sequence, let $\varphi_n(x)=\eta(x)u_n(x),$
we have
\[\langle\Phi_\lambda^{\prime}(u_n),\varphi_n \rangle=\langle\Phi_\lambda^{\prime}(u_n),\eta u_n \rangle=o(1),\]
where the cut-off function $\eta \in C^{\infty}(\mathbb{R}^N)$ satisfies
\[\eta (x) = \left\{ \begin{gathered}
  1,\forall x \in B_R^c(0), \hfill \\
  0,\forall x \in B_{R/2}(0), \hfill \\
\end{gathered}  \right.\]
\[\eta(x)\in [0,1],\quad \Omega_{\Gamma}^{'}\subset B_{R/2}(0),\]
where $ B_R^c(0)=\{x\in\mathbb{R}^N:|x|\geq R\}$. Using the argument method of Lemma 1.1 in \cite{c7} (also see \cite{c1}), we know that for every $\varepsilon>0$, there exists $R>0$ such that
\begin{equation}\label{e8}
\int_{\{x\in{\mathbb{R}^N}:|x|\geq R\}} {K(x)(|\nabla u_n{|^2} + (\lambda a(x) + b(x)){u_n^2})dx} \leq \varepsilon,\ n\in \mathbb{N}.
\end{equation}

Applying Proposition \ref{l3} to the sequence $\{u_n\}$, we obtain a sequence $\{\nu_n\}$ such that $\nu_n=0$ for all $n\in \mathbb{N}$. Therefore,
\begin{equation}\label{e20}
u_n \to u \quad \text{in}\ L_{K,\text{loc}}^{2^*}(\mathbb{R}^N).
\end{equation}
In fact, once we prove that $\{u_n\}$ is a $\text{(PS)}_c$ sequence, for every $\phi\in C_0^{\infty}(\Omega)$, we can multiply both sides of equation \eqref{e4} by $u_n\phi$, integrate by parts, and obtain
\begin{align}\label{e5}
&\int_{{\mathbb{R}^N}} {K(x)|\nabla u_n{|^2}\phi dx} +\int_{{\mathbb{R}^N}} {K(x)\nabla u_n \nabla\phi dx} +\int_{{\mathbb{R}^N}} {K(x)(\lambda a(x) + b(x)){u_n^2}\phi dx} \\ \nonumber
&= \int_{{\mathbb{R}^N}} {K(x)g(x,u_n)u_n\phi} dx +o(1) .
\end{align}
If $\{x_n\}$ is the sequence given in Proposition \ref{l3}, let $\Phi_{\varepsilon}=\Phi(x-x_n)/{\varepsilon}$, $x\in \mathbb{R}^N$, $\varepsilon >0$, where $\Phi\in C_0^{\infty}(\mathbb{R}^N,[0,1])$ verifying $\Phi\equiv1$ on $B_1(0)$, $\Phi\equiv0$ on $B_2^c(0)$, and $|\nabla \Phi|\leq 2$. Considering $\phi=\Phi_\varepsilon$ in equation \eqref{e5}, for all $n\in\mathbb{N}$, we can use the method in \cite{c14} to show that $\mu(x_n)\leq \nu_n$. If $\nu_n>0$, combining with Proposition \ref{l3}, we obtain
\begin{equation}\label{e7}
\nu_n\geq S^{N/2},\ \forall\ n\in \mathbb{N}.
\end{equation}
Thus, it can be seen that $\{\nu_n\}$ is finite.

Next, we will prove that for all $n\in \mathbb{N}$, $\nu_n=0$. Again, using the fact that $\{u_n\}$ is a $\text{(PS)}_c$ sequence, we have
\[I(u_n)-\frac{1}{q+1}\langle I^{\prime}(u_n),u_n\rangle=c+o(1).\]
Therefore, we have
\begin{align*}
\left( \frac{1}{2}-\frac{1}{q+1}\right)\int_{{\mathbb{R}^N}} {K(x)|\nabla u_n{|^2} dx} &+  \left( \frac{1}{2}-\frac{1}{q+1}\right)\int_{{\mathbb{R}^N}} {K(x)(\lambda a(x)+b(x))u_n^2 dx}\\
&+ \int_{{\mathbb{R}^N}} {K(x)\left[ \frac{1}{q+1}g(x,u_n)u_n-G(x,u_n) \right] dx}=c+o(1).
\end{align*}
Since
\[ \int_{{\mathbb{R}^N}} {K(x)(\lambda a(x)+b(x))u_n^2 dx}
+ \int_{{\mathbb{R}^N}} {K(x)\left[ \frac{1}{q+1}g(x,u_n)u_n-G(x,u_n) \right] dx}\geq 0,\]
we can conclude that
\[\left( {\frac{1}{2} - \frac{1}{{q + 1}}} \right)\int_{{\mathbb{R}^N}} {K(x)|\nabla {u_n}{|^2}dx}  \leq c + {o}(1).\]
Then,
\begin{equation}\label{e6}
\left( {\frac{1}{2} - \frac{1}{{q + 1}}} \right)\mu(x_n)\leq c,\ \forall \ n\in \mathbb{N}.
\end{equation}
Since $\mu(x_n)\geq S\nu_n^{2/2^*}$, if there exists  $\nu_n>0$ for some $n\in \mathbb{N}$, from \eqref{e7} and \eqref{e6}, we obtain the inequality
\[c\geq \left( {\frac{1}{2} - \frac{1}{{q + 1}}} \right)S^{N/2},\]
which is a contradiction. Therefore, for all $n\in \mathbb{N}$, we have $\nu_n=0$, that is, the \eqref{e20} is established.
From  \eqref{e8} and \eqref{e20}, we can conclude that
\[\int_{{\mathbb{R}^N}} {K(x)g(x,u_n)u_ndx}\to \int_{{\mathbb{R}^N}} {K(x)g(x,u)udx},\ n\to \infty.\]
This means
 \[u_n\to u,\ \text{in}\ H_{K,\lambda }^1(\mathbb{R}^N).\]
\end{proof}

A sequence \ $\{u_n\}\subset H_{K}^1(\mathbb{R}^N)$, called $\text{ (PS)}_{\infty,c}$, is one that satisfies.
\[\left\{ \begin{gathered}
  {u_n} \in H_{K,{\lambda _n}}^1(\mathbb{R}^N), \hfill \\
  {\lambda _n} \to \infty ,\ n \to \infty , \hfill \\
  {\Phi _{{\lambda _n}}}({u_n}) \to c,\ {\lambda _n} \to \infty , \hfill \\
  {||\Phi^{\prime}_{\lambda _n}}({u_n})||_K \to 0,\ {\lambda _n} \to \infty.  \hfill \\
\end{gathered}  \right.\]

\begin{lemma}\label{p3}
Let $\{u_n\}$ be a $\text{(PS)}_{\infty,c}$ sequence with $c \in \left( {0,\left( {\frac{1}{2} - \frac{1}{q+1}} \right){S^{N/2}}} \right)$. Then, for some subsequence given by $\{u_n\}$, there exists $u\in H_K^1(\mathbb{R}^N)$ such that
 \[u_n\rightharpoonup u,\ {\rm in}\ H_K^1(\mathbb{R}^N).\]
 Moreover,

$\rm(i)$ $u\equiv 0$ in $\mathbb{R}^N\setminus\Omega_\Gamma$ and $u|_{\Omega_j}$ is a nonnegative solution of
\begin{equation}\label{e11}
\left\{ \begin{gathered}
   - \Delta u - \frac{1}{2}(x \cdot \nabla u)+b(x)u = \beta |u|^{q-1}u + {|u|^{2^* - 2}u},\ {\rm in}\ \Omega_j,\hfill \\
  u = 0, \ {\rm on} \ \partial \Omega_j, \hfill \\
\end{gathered}  \right.
\end{equation}
where $j\in \Gamma$.

$\rm(ii)$ $u_n$ converges to $u$ in a stronger sense, i.e.,
\[||u_n-u||_{K,\lambda_n}\to 0.\]
Therefore,
\[u_n\to u,\ {\rm in}\ H_K^1(\mathbb{R}^N).\]

$\rm(iii)$ As $\lambda_n \to \infty$, $u_n$ satisfies:
\[{\lambda_n}\int_{{\mathbb{R}^N}} {K(x)a(x)u_n^2dx}\to 0,\]
\[||u_n||_{K,\lambda_n,\mathbb{R}^N\setminus\Omega_\Gamma}^2 \to 0,\]
\[||u_n||_{K,\lambda_n,\Omega_j^{'}}^2  \to \int_{\Omega_j} {K(x)(|\nabla u{|^2}  + b(x){u^2})dx} ,j\in \Gamma.\]
\end{lemma}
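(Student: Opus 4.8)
Here is how I would prove Lemma~\ref{p3}. The plan is: extract a weak limit using the uniform bound of Lemma~\ref{l5}; use the coercivity forced by $\lambda_n\to\infty$ to concentrate the limit on $\overline{\Omega_\Gamma}$; rule out concentration at the critical exponent exactly as in Lemma~\ref{p1}, exploiting the threshold $c<(\tfrac12-\tfrac1{q+1})S^{N/2}$; and finally upgrade the weak convergence to convergence in $\|\cdot\|_{K,\lambda_n}$.

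First I would record the weak limit. By Lemma~\ref{l5}, $\{u_n\}$ is bounded in $H^1_{K,\lambda_n}(\mathbb R^N)$ with a bound independent of $n$; since $\lambda_n\ge1$ and $\lambda_n a+b\ge a+b\ge M_0$ by \eqref{e2}, it is bounded in $H^1_K(\mathbb R^N)$, so (along a subsequence) $u_n\rightharpoonup u$ in $H^1_K(\mathbb R^N)$ and, by Proposition~\ref{p5}, $u_n\to u$ in $L^p_K(\mathbb R^N)$ for all $p\in[2,2^*)$ and a.e. From the bound on $\|u_n\|_{K,\lambda_n}$ together with $|b|\le M_1$ one gets $\lambda_n\int_{\mathbb R^N}Ka u_n^2\,dx\le C$, hence $\int_{\mathbb R^N}Ka u_n^2\,dx\to0$, and Fatou forces $a u^2=0$ a.e.; since $a>0$ on $\mathbb R^N\setminus\overline\Omega$ and $|\partial\Omega|=0$, this gives $u\equiv0$ a.e.\ on $\mathbb R^N\setminus\Omega$, so in particular $u\in H^1_{K,\lambda_n}(\mathbb R^N)$.

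Next I would localize away from $\Omega_\Gamma$. For each $j\notin\Gamma$ fix $\psi_j\in C_0^\infty(\Omega_j')$ with $\psi_j\equiv1$ on $\overline{\Omega_j}$ and $0\le\psi_j\le1$. Since $\overline{\Omega_j'}\cap\overline{\Omega_\Gamma'}=\emptyset$, we have $g(\cdot,t)=f(t)$ on $\operatorname{supp}\psi_j$, while $\operatorname{supp}\nabla\psi_j$ is a compact subset of $\Omega_j'\setminus\overline{\Omega_j}\subset\{a>0\}$, so $a\ge a_0>0$ there, and the bound $\lambda_n\int Ka u_n^2\le C$ makes the integral containing $\nabla\psi_j$ an $o(1)$. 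Testing $\langle\Phi_{\lambda_n}'(u_n),\psi_j u_n\rangle=o(1)$, using $f(t)t\le\nu_0 t^2$ and $\lambda_n a+b\ge M_0$, yields
\[\int_{\mathbb R^N}K|\nabla u_n|^2\psi_j\,dx+(M_0-\nu_0)\int_{\mathbb R^N}K u_n^2\psi_j\,dx\le o(1),\]
and since $\nu_0\approx0<M_0$ both terms tend to $0$; hence $u_n\to0$ in $H^1_K(\Omega_j)$ and $u\equiv0$ on $\Omega_j$. Combined with the previous step, $u$ is supported in $\overline{\Omega_\Gamma}\subset\Omega_\Gamma'$, which is the first half of (i).

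The core of the argument is ruling out concentration, and I expect this to be the main obstacle, since one must track precisely which terms are sign-definite once $\lambda_n$ diverges and the nonlinearity is replaced by $g$ outside $\Omega_\Gamma'$. I would apply Proposition~\ref{l3} to $\{u_n\}$, getting atoms $\nu_i\delta_{x_i}$ of $\nu$ with $\mu(\{x_i\})\ge S\nu_i^{2/2^*}$, and argue as in Lemma~\ref{p1}. A standard localization at $x_i$ (test $\langle\Phi_{\lambda_n}'(u_n),\Phi_\varepsilon u_n\rangle=o(1)$ with a bump $\Phi_\varepsilon$ at $x_i$, use $g(\cdot,t)\le h(t)$, discard $\lambda_n a u_n^2\ge0$, and absorb the cut-off gradient term via $\varepsilon^{-2}\int_{B_{2\varepsilon}(x_i)}K u_n^2\to0$) gives $\mu(\{x_i\})\le\nu_i$ when $x_i\in\Omega_\Gamma'$, hence $\nu_i\ge S^{N/2}$; when $x_i\notin\overline{\Omega_\Gamma'}$ the same test, now with $g=f$ of linear growth, gives $\mu(\{x_i\})\le0$, impossible once $\nu_i>0$. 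So any atom lies in $\Omega_\Gamma'$ with $\nu_i\ge S^{N/2}$. Now use $\Phi_{\lambda_n}(u_n)-\tfrac1{q+1}\langle\Phi_{\lambda_n}'(u_n),u_n\rangle=c+o(1)$ together with the elementary facts that $\tfrac1{q+1}g(x,t)t-G(x,t)=(\tfrac1{q+1}-\tfrac1{2^*})t^{2^*}\ge0$ for $x\in\Omega_\Gamma'$, that $\tfrac1{q+1}g(x,t)t-G(x,t)\ge-(\tfrac12-\tfrac1{q+1})\nu_0 t^2$ for $x\notin\Omega_\Gamma'$, and that $u\equiv0$ on $\mathbb R^N\setminus\Omega_\Gamma'$ (so $\int_{\mathbb R^N\setminus\Omega_\Gamma'}K u_n^2\to0$); passing to the limit gives, for any atom $x_i\in\Omega_\Gamma'$,
\[c\ \ge\ \Big(\frac{1}{2}-\frac{1}{q+1}\Big)\mu(\{x_i\})+\Big(\frac{1}{q+1}-\frac{1}{2^*}\Big)\nu_i\ \ge\ \Big(\frac{1}{2}-\frac{1}{q+1}\Big)S\nu_i^{2/2^*}+\Big(\frac{1}{q+1}-\frac{1}{2^*}\Big)\nu_i\ \ge\ \Big(\frac{1}{2}-\frac{1}{2^*}\Big)S^{N/2},\]
contradicting $c<(\tfrac12-\tfrac1{q+1})S^{N/2}$. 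Hence $\nu$ has no atoms, $u_n\to u$ in $L^{2^*}_{K,\mathrm{loc}}(\mathbb R^N)$, and since $\Omega_\Gamma'$ is bounded, $\int_{\Omega_\Gamma'}K u_n^{2^*}\,dx\to\int_{\Omega_\Gamma'}K u^{2^*}\,dx$.

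It remains to identify the limit and strengthen the convergence. For $j\in\Gamma$ and $\varphi\in C_0^\infty(\Omega_j)$ (where $a\equiv0$ and $g=h$) I would pass to the limit in $\langle\Phi_{\lambda_n}'(u_n),\varphi\rangle=o(1)$: weak $H^1_K$-convergence handles the Dirichlet term, strong $L^2_K$-convergence the term with $b$, strong $L^{q+1}_K$-convergence the term $\beta u_n^q\varphi$, and the local $L^{2^*}_K$-convergence just proved the critical term $u_n^{2^*-1}\varphi$; so $u|_{\Omega_j}$ solves \eqref{e11} weakly, and it is nonnegative since the $(\mathrm{PS})$ sequence is (the Remark before Lemma~\ref{l4}), which establishes (i). For (ii) I would expand $\|u_n-u\|_{K,\lambda_n}^2=\|u_n\|_{K,\lambda_n}^2-2\langle u_n,u\rangle_{K,\lambda_n}+\|u\|_{K,\lambda_n}^2$; because $a u=0$ a.e., the last two terms both converge to $\sum_{j\in\Gamma}\int_{\Omega_j}K(|\nabla u|^2+bu^2)\,dx$, while $\|u_n\|_{K,\lambda_n}^2=\int_{\mathbb R^N}K g(x,u_n)u_n\,dx+o(1)$ converges to $\int_{\mathbb R^N}K g(x,u)u\,dx$ — using the previous step for the critical part on $\Omega_\Gamma'$ and a Vitali argument (with a standard uniform tail estimate in the spirit of \eqref{e8}) for the exterior part — and testing \eqref{e11} with $u$ identifies $\int_{\mathbb R^N}K g(x,u)u\,dx$ with $\sum_{j\in\Gamma}\int_{\Omega_j}K(|\nabla u|^2+bu^2)\,dx$; hence $\|u_n-u\|_{K,\lambda_n}\to0$, and in particular $u_n\to u$ in $H^1_K(\mathbb R^N)$. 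Finally (iii) follows from (i) and (ii): $\lambda_n\int K a u_n^2\,dx=\int K\lambda_n a(u_n-u)^2\,dx\le\|u_n-u\|_{K,\lambda_n}^2\to0$; $\|u_n\|_{K,\lambda_n,\mathbb R^N\setminus\Omega_\Gamma}^2=\|u_n-u\|_{K,\lambda_n,\mathbb R^N\setminus\Omega_\Gamma}^2\le\|u_n-u\|_{K,\lambda_n}^2\to0$; and, for $j\in\Gamma$, $\|u_n\|_{K,\lambda_n,\Omega_j'}^2\to\|u\|_{K,\lambda_n,\Omega_j'}^2=\int_{\Omega_j}K(|\nabla u|^2+bu^2)\,dx$.
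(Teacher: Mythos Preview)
Your argument is correct and follows the same overall strategy as the paper's proof: uniform bounds from Lemma~\ref{l5}, Fatou to pin $u$ to $\overline\Omega$, concentration--compactness under the threshold $(\tfrac12-\tfrac{1}{q+1})S^{N/2}$ to exclude bubbling, and then an upgrade to strong convergence in the $\lambda_n$-norm from which (iii) is read off. The only tactical variations are that for $j\notin\Gamma$ you test with a cut-off $\psi_j u_n$ \emph{before} passing to the limit (the paper instead passes to the limit first, tests the limiting equation with $u|_{\Omega_j}$, and invokes Lemma~\ref{l1}), and for (ii) you expand $\|u_n-u\|_{K,\lambda_n}^2$ directly whereas the paper evaluates $\langle\Phi_{\lambda_n}'(u_n)-\Phi_{\lambda_n}'(u),\,u_n-u\rangle$; both choices are equivalent here.
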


\begin{proof}
According to Lemma \ref{l4}, there exists a positive constant $M>0$ such that
\[||u_n||_{K,\lambda_n}\leq M,\ \forall \ n\in \mathbb{N}.\]
Therefore, $\{u_n\}$ is a bounded sequence in $H_K^1(\mathbb{R}^N)$. For a subsequence still denoted by $\{u_n\}$, we can assume that there exists $u\in H_K^1(\mathbb{R}^N)$ such that
\[u_n\rightharpoonup u,\ \text{in} \ H_K^1(\mathbb{R}^N),\]
\[u_n(x)\to u(x),\ \text{a.e.} \ \mathbb{R}^N.\]
Using a similar argument as in the proof of Lemma \ref{p1}, we obtain
\begin{equation}\label{e9}
u_n\to u,\ \text{in} \ H_K^1(\mathbb{R}^N).
\end{equation}

To prove (i), we fix the set $C_m=\{x\in\mathbb{R}^N:\ a(x)\geq \frac{1}{m}\}$. Then
\[\int_{C_m} {K(x)u_n^2dx}\leq \frac{m}{\lambda_n}\int_{{\mathbb{R}^N}} {\lambda_nK(x)a(x)u_n^2dx},\]
That is,
\[\int_{C_m} {K(x)u_n^2dx}\leq \frac{m}{\lambda_n} ||u_n||_{K,\lambda_n}^2.\]
Using Fatou's lemma in the above inequality, this implies
\[\int_{C_m} {K(x)u^2dx}=0,\ \forall \ m\in \mathbb{N}.\]
Therefore, we have $u(x)=0$ on $\bigcup\limits_{m = 1}^\infty  {{C_m}}  = \mathbb{R}^N\setminus\overline{\Omega}$. We can assert that $u|_{\Omega_j}\in H_K^1(\Omega_j)$ for all $j\in\{1,\cdots,k\}$.

Once we have shown that for all $\varphi\in C_0^{\infty}(\Omega_j)$, as $n\to \infty$, we have $\langle \Phi_{\lambda_n}^{\prime}(u_n),\varphi \rangle\to 0$, then from \eqref{e9}, we have
\begin{equation}\label{e10}
\int_{\Omega_j} {K(x)(\nabla u \nabla \varphi + b(x)u\varphi) dx} - \int_{\Omega_j} {K(x)g(x,u)\varphi dx}=0.
\end{equation}
In other words, for all $j\in\{1,\cdots,k\}$, $u|_{\Omega_j}$ is a solution of the equation \eqref{e11}.

For each $j\in\{1,\cdots,k\}\setminus\Gamma$, we let $\varphi=u|_{\Omega_j}$ in \eqref{e10}, we have
\[\int_{\Omega_j} {K(x)(|\nabla u|^2 + b(x)u^2) dx} - \int_{\Omega_j} {K(x)f(u)u dx}=0,\]
That is,
\[||u||_{K,\lambda,\Omega_j}^2-\int_{\Omega_j} {K(x)f(u)u dx}=0.\]
For all $t\in \mathbb{R}$, we have $f(t)t\leq \nu_0t^2$. Using \eqref{e12}, we have
\[\delta_0||u||_{K,\lambda,\Omega_j}^2\leq||u||_{K,\lambda,\Omega_j}^2
-\nu_0|u|_{K,2,\Omega_j}^2\leq ||u||_{K,\lambda,\Omega_j}^2-\int_{\Omega_j} {K(x)f(u)u dx}=0.\]
Therefore, for $j\in\{1,\cdots,k\}\setminus\Gamma$, we have $u=0$ in $\Omega_j$. This verifies (i).

For (ii), we have
\begin{align*}
||u_n-u||_{K,\lambda_n}^2&-\int_{\mathbb{R}^N\setminus\Omega_\Gamma^{'}} {K(x)(f(u_n)-f(u))(u_n-u) dx}\\
&-\int_{\Omega_\Gamma^{'}} {K(x)(h(u_n)-h(u))(u_n-u) dx}\\
&=\langle \Phi_{\lambda_n}^{\prime}(u_n),(u_n-u) \rangle-\langle \Phi_{\lambda_n}^{\prime}(u),(u_n-u) \rangle.
\end{align*}
Using the equality
\[\int_{\Omega_\Gamma^{'}} {K(x)(h(u_n)-h(u))(u_n-u) dx}=o(1),\]
\[\langle \Phi_{\lambda_n}^{\prime}(u),(u_n-u) \rangle=\int_{\Omega_\Gamma} {K(x)[\nabla u \nabla (u_n-u) + a(x)u(u_n-u) ]dx} - \int_{\Omega_\Gamma} {K(x)f(u)(u_n-u)dx}=o(1),\]
and the inequality
\[|\langle \Phi_{\lambda_n}^{\prime}(u_n),(u_n-u) \rangle|\leq ||\Phi_{\lambda_n}^{\prime}(u_n)||_K\left(||u_n||_{K,\lambda_n}
+||u||_{K,\lambda_n}\right) =o(1),\]
We have
\[||u_n-u||_{K,\lambda_n}^2-\int_{\mathbb{R}^N\setminus\Omega_\Gamma^{'}} {K(x)(f(u_n)-f(u))(u_n-u)dx}=o(1).\]
Using equation \eqref{e12}, $u\equiv 0$ on $\mathbb{R}^N\setminus\Omega_\Gamma^{'}$, and the above estimate, we obtain
\[||u_n-u||_{K,\lambda_n}^2\to 0,\ n\to \infty.\]

To prove (iii), from equation \eqref{e2}, we have
\[\lambda_n\int_{\mathbb{R}^N} {K(x)a(x)u_n^2dx}\leq C||u_n-u||_{K,\lambda_n}^2.\]
Therefore,
\[\lambda_n\int_{\mathbb{R}^N} {K(x)a(x)u_n^2dx}\to 0,\ n\to \infty.\]
\end{proof}

To establish the uniform boundedness of $\{u_n\}$ in $L^\infty_K$, we need the following two propositions.

\begin{proposition}[\cite{c15,c13}]\label{p2}
Let $b$ be a nonnegative measurable function, and let $g:\mathbb{R}^N\times \mathbb{R}_+\to \mathbb{R}_+$ satisfy the following inequality. For every nonnegative function $v\in H_K^1(\mathbb{R}^N)$, there exists a function $h\in L_K^{N/2}(\mathbb{R}^N)$ such that
\[g(x,v(x))\leq (h(x)+C_g)v(x),\ \forall\ x\in \mathbb{R}^N.\]
If $v\in H_K^1(\mathbb{R}^N)$ is a weak solution of the equation
\[- \Delta v - \frac{1}{2}(x \cdot \nabla v)+b(x)v=g(x,v),\]
then $v\in L_K^p(\mathbb{R}^N)$ for all $2\leq p<\infty$. Moreover, there exists a positive constant $C_p=C(p,C_g,h)$ such that
\[|v|_{K,p}\leq C_p||v||_{K}.\]
If $\{v_k\},\{b_k\}$, and $\{h_k\}$ satisfy the above assumptions, and $h_k\to h$ in $L_K^{N/2}(\mathbb{R}^N)$, then the sequence $C_{p,k}=C(p,C_g,h_k)$ is bounded.
\end{proposition}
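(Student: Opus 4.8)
The plan is to prove this as a weighted Brezis--Kato estimate via Moser iteration carried out in the measure space $(\mathbb{R}^N,K(x)\,dx)$, exploiting the divergence form $L=-\tfrac1K\nabla\cdot(K\nabla\,\cdot\,)$ and the weighted embedding of Proposition~\ref{p5}. Since by Remark~1 and the hypothesis the relevant solutions are nonnegative, I would assume $v\geq0$. For $s\geq1$ and $L>0$ set $v_L=\min\{v,L\}$ and choose the test function $\varphi=v\,v_L^{2(s-1)}$; this lies in $H_K^1(\mathbb{R}^N)$ because $v_L$ is bounded and $v\in H_K^1$, and it is nonnegative. Writing $w_L=v\,v_L^{s-1}$, a direct computation on the sets $\{v<L\}$ and $\{v\ge L\}$ gives the pointwise chain-rule inequality $\nabla v\cdot\nabla\varphi\geq \tfrac1s|\nabla w_L|^2$ for $s\geq1$ (using $2s-1\geq s$ on $\{v<L\}$).

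Next I would insert $\varphi$ into the weak formulation $\int_{\mathbb{R}^N}K(\nabla v\cdot\nabla\varphi+b\,v\varphi)\,dx=\int_{\mathbb{R}^N}Kg(x,v)\varphi\,dx$. Dropping the term $\int K b\,v\varphi=\int K b\,v^2v_L^{2(s-1)}\geq0$ (here $b\geq0$) and using the structural bound $g(x,v)\leq(h+C_g)v$, one gets $\tfrac1s\int K|\nabla w_L|^2\,dx\leq\int K(h+C_g)w_L^2\,dx$. Adding $\int Kw_L^2$ and applying Proposition~\ref{p5} in the form $S|w_L|_{K,2^*}^2\leq\|w_L\|_K^2$, then splitting $h=h\chi_{\{h\le A\}}+h\chi_{\{h>A\}}$ and applying Hölder with exponents $N/2$ and $N/(N-2)$ to the second piece (noting $2\cdot\tfrac{N}{N-2}=2^*$), yields
\[\|w_L\|_K^2\leq (sC_g+1+sA)\int_{\mathbb{R}^N}Kw_L^2\,dx+\frac{s}{S}\,|h\chi_{\{h>A\}}|_{K,N/2}\,\|w_L\|_K^2.\]
Since $h\in L_K^{N/2}(\mathbb{R}^N)$, by dominated convergence I can pick $A=A(s)$ with $\tfrac{s}{S}|h\chi_{\{h>A\}}|_{K,N/2}\leq\tfrac12$, absorb the last term, and use $\int Kw_L^2\leq\int Kv^{2s}$ to obtain $|w_L|_{K,2^*}^2\leq \tfrac{2(sC_g+1+sA)}{S}\,|v|_{K,2s}^{2s}$. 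Letting $L\to\infty$ (monotone convergence, $w_L\uparrow v^s$) gives $|v|_{K,2^*s}^{2s}\leq C(s,C_g,h)\,|v|_{K,2s}^{2s}$, so $v\in L_K^{2s}$ implies $v\in L_K^{2^*s}$.

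I would then iterate: starting from $s_0=2^*/2$, where $v\in L_K^{2^*}=L_K^{2s_0}$ is guaranteed by Proposition~\ref{p5} with $|v|_{K,2^*}\leq S^{-1/2}\|v\|_K$, and taking $s_{n+1}=(2^*/2)s_n$, the exponents $2s_n=2(N/(N-2))^{\,n+1}$ tend to infinity; Hölder interpolation fills the intervening exponents, so $v\in L_K^p$ for all $2\leq p<\infty$. Composing the finitely many inequalities needed to reach a prescribed $p$ with the initial Sobolev bound produces $|v|_{K,p}\leq C_p\|v\|_K$ with $C_p=C(p,C_g,h)$. For the final uniformity claim, note that $C_{p,k}$ depends on $h_k$ only through the truncation levels at each step. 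Since $h_k\to h$ in $L_K^{N/2}$, the family $\{K|h_k|^{N/2}\}$ is equi-integrable, and $\mu(\{|h_k|>A\})\leq A^{-N/2}\sup_k|h_k|_{K,N/2}^{N/2}\to0$ uniformly in $k$; hence $\sup_k|h_k\chi_{\{|h_k|>A\}}|_{K,N/2}\to0$ as $A\to\infty$, so one common sequence of thresholds works for all large $k$ and $\{C_{p,k}\}$ is bounded. The main delicate point is the bookkeeping of the chain-rule inequality across the truncation and, above all, keeping the dependence of the constants explicit enough through the iteration to extract this uniform bound; the rest is the classical Brezis--Kato iteration transported to the weighted space.
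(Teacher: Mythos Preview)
The paper does not supply its own proof of this proposition: it is stated with a citation to Brezis--Kato~\cite{c15} and~\cite{c13} and no argument is given. Your weighted Brezis--Kato bootstrap---truncated test function $v\,v_L^{2(s-1)}$, the chain-rule inequality $\nabla v\cdot\nabla\varphi\geq\tfrac1s|\nabla w_L|^2$, the splitting of $h$ at level $A$ and absorption via Proposition~\ref{p5}, geometric iteration $2s\mapsto 2^*s$, and the equi-integrability argument for the uniform bound on $C_{p,k}$---is correct and is precisely the standard proof those references provide, transported to the measure $K\,dx$; there is nothing further in the paper to compare against.
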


\begin{lemma}\label{p4}
Assume that $b$ is a set as in Proposition \ref{p2}, $q>N/2$, and for every nonnegative function $v\in H_K^1(\mathbb{R}^N)$, there exists $h\in L_K^q(\mathbb{R}^N)$ such that
\[g(x,v(x))\leq h(x)v(x),\ \forall\ x\in \mathbb{R}^N.\]
If $v$ is a nonnegative weak solution of the equation
\[- \Delta u - \frac{1}{2}(x \cdot \nabla u) +b(x)v=g(x,v),\]
then there exists $C=C(q,|h|_{K,q})>0$ such that
\[|v|_{K,\infty}\leq C||v||_{K}.\]
Moreover, if $\{v_k\},\{b_k\}$, and $\{h_k\}$ satisfy the above assumptions, and $|h_k|_{K,q}$ is bounded, then the sequence $C_k=C(q,|h_k|_{K,q})$ is bounded.
\end{lemma}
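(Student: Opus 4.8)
The plan is to run a weighted Moser (Brezis--Kato type) iteration, paralleling the classical $L^\infty$ estimate used in \cite{c15,c13}, but carrying the weight $K$ throughout and exploiting the weighted Sobolev inequality encoded in the constant $S$ of Proposition \ref{p5}. We may assume $h\ge0$ (replace $h$ by $h_+$, which does not increase $|h|_{K,q}$). Since $b\ge0$, $g\ge0$ and $v\ge0$, testing the weak form of $-\tfrac1K\nabla\cdot(K\nabla v)+bv=g(x,v)$ against a nonnegative $\varphi\in H_K^1(\mathbb R^N)$ gives $\int_{\mathbb R^N}K\,\nabla v\cdot\nabla\varphi\,dx\le\int_{\mathbb R^N}K\,h\,v\,\varphi\,dx$. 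First I would establish the basic energy inequality: for $\beta\ge1$ and $T>0$ set $v_T:=\min(v,T)$, $w:=v\,v_T^{\beta-1}$, and test with $\varphi:=v\,v_T^{2(\beta-1)}$, which is a bounded Lipschitz function of $v$ vanishing at the origin and hence lies in $H_K^1(\mathbb R^N)$. The pointwise bounds $|\nabla w|^2\le\beta^2 v_T^{2(\beta-1)}|\nabla v|^2$ and $\nabla v\cdot\nabla\varphi\ge v_T^{2(\beta-1)}|\nabla v|^2$ give $\int_{\mathbb R^N}K|\nabla w|^2\,dx\le\beta^2\int_{\mathbb R^N}K\,h\,w^2\,dx$; combining with $\int_{\mathbb R^N}K(|\nabla w|^2+w^2)\,dx\ge S|w|_{K,2^*}^2$ and Hölder's inequality with exponents $q$, $q'=q/(q-1)$ yields $S|w|_{K,2^*}^2\le\beta^2|h|_{K,q}\,|w|_{K,2q'}^2+|w|_{K,2}^2$. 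The hypothesis $q>N/2$ gives $q'<2^*/2$, i.e. $2q'<2^*$, which is precisely what makes the right-hand exponent strictly below the left-hand one. Using $0\le w\le v^\beta$ and letting $T\to\infty$ by monotone convergence, one obtains, for every $\beta\ge1$ with $v\in L_K^{2\beta q'}(\mathbb R^N)$,
\begin{equation*}
S\,|v|_{K,2^*\beta}^{2\beta}\le\beta^2|h|_{K,q}\,|v|_{K,2\beta q'}^{2\beta}+|v|_{K,2\beta}^{2\beta}.
\end{equation*}

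Next I would iterate. Setting $\chi:=2^*/(2q')>1$ and $\beta_k:=\chi^k$, one checks $2\beta_k q'=2^*\chi^{k-1}$ and $2\beta_k\le2^*\chi^{k-1}$; since $v\in H_K^1(\mathbb R^N)\hookrightarrow L_K^{2^*}(\mathbb R^N)$, an induction (using interpolation between $L_K^2$ and $L_K^{2^*\chi^{k-1}}$ to control the $|v|_{K,2\beta_k}$ term) shows $v\in\bigcap_{k\ge0}L_K^{2^*\chi^k}(\mathbb R^N)$ and, writing $B_k:=\max\{|v|_{K,2},|v|_{K,2^*\chi^k}\}$,
\begin{equation*}
B_k\le\big(S^{-1}(\chi^{2k}|h|_{K,q}+1)\big)^{1/(2\chi^k)}B_{k-1}
\end{equation*}
(absorbing the harmless case where the factor is $<1$ into $\max\{1,\cdot\}$). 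Because $\sum_k\chi^{-k}<\infty$ and $\sum_k k\chi^{-k}<\infty$, the infinite product of these factors converges to a constant of the form $C=e^{C_1(N,q)}(1+|h|_{K,q})^{C_2(N,q)}$, so $\sup_k|v|_{K,2^*\chi^k}\le C\,B_0\le C'(q,|h|_{K,q})\,\|v\|_K$, where $B_0\le c\|v\|_K$ by Proposition \ref{p5}.

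It remains to conclude. Finiteness of $L:=\sup_k|v|_{K,2^*\chi^k}$ forces $v\in L^\infty(\mathbb R^N)$ with $|v|_{K,\infty}\le L$: if $t<|v|_{K,\infty}$ then $\{v>t\}$ has positive, necessarily finite, $K$-measure, whence $|v|_{K,2^*\chi^k}\ge t\,(\mu\{v>t\})^{1/(2^*\chi^k)}\to t$. Therefore $|v|_{K,\infty}\le C(q,|h|_{K,q})\,\|v\|_K$, which is the asserted estimate (here we also use that $v\in L_K^{2^*}\cap L^\infty$ makes $\lim_{p\to\infty}|v|_{K,p}=|v|_{K,\infty}$ meaningful, consistently with the compatibility statement recalled in Section 2). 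For the last assertion, the constant produced above depends on the data only through $|h|_{K,q}$ (besides the fixed $N,q,S$) and is nondecreasing in it; hence if $|h_k|_{K,q}$ is bounded, then $C_k=C(q,|h_k|_{K,q})$ is bounded. Nothing in the argument used $b_k$ or $v_k$ beyond the stated hypotheses.

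The main obstacle is not the iteration scheme but the bookkeeping: making each step rigorous through the truncation $v_T$ (so that every integral is finite before one knows $v\in L^\infty$), and, above all, tracking the constants so that the final bound depends on $h$ only through $|h|_{K,q}$ and in a monotone fashion — the latter being exactly what the uniformity claim requires. A wrinkle special to this weighted setting is that $K\,dx$ is an infinite measure, so one cannot absorb lower-order terms by adding a constant to $h$; this is why the term $|v|_{K,2\beta}^{2\beta}$ is retained in the energy inequality and handled by interpolation rather than discarded.
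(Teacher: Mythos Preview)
Your proposal is correct and follows essentially the same Moser iteration scheme as the paper: truncate the test function, derive a one-step estimate of the form $|v|_{K,r\alpha}\lesssim \alpha^{1/\alpha}|h|_{K,q}^{1/2\alpha}|v|_{K,2\alpha q'}$, and iterate with ratio $\chi=r/(2q')>1$ so that the infinite product of constants converges. The only notable difference is bookkeeping: the paper uses the level-set truncation $A_n=\{|v|^{\alpha-1}\le n\}$ and defers the derivation of the basic inequality \eqref{e13} to \cite{c1,c9}, obtaining a clean product formula without a lower-order remainder, whereas you keep the term $|v|_{K,2\beta}^{2\beta}$ coming from the $+|u|^2$ in the weighted Sobolev norm and dispose of it by interpolation---a more self-contained treatment, but not a different route.
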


\begin{proof}
We prove this lemma using Moser iteration and the methods in \cite{c1,c9,c16}.

For each $n\in\mathbb{N}$ and $\alpha >1$ such that $v\in L_K^{2\alpha q_1}(\mathbb{R}^N)$. Let $A_n=\{x\in\mathbb{R}^N:|v|^{\alpha-1}\leq n\}$, $B_n=\mathbb{R}^N\setminus A_n$, and define the function $v_n$ as follows,
\[v_n := \begin{cases} v|v{|^{2(\alpha  - 1)}}, & \text{on } A_n, \\ {n^2}v, & \text{on } B_n. \end{cases}\]
Once we prove that $v_n\in H_K^1(\mathbb{R}^N)$, we have
\[\int_{\mathbb{R}^N} {K(x)(\nabla v \nabla v_n + b(x)vv_n) dx} = \int_{\mathbb{R}^N} {K(x)g(x,v)v_n dx}.\]
Consider $q_1=q/(q-1)$ and $r>2q_1$,
\[\omega_n := \begin{cases} v|v{|^{\alpha  - 1}}, & \text{on } A_n, \\ {n}v, & \text{on } B_n. \end{cases}\]
According to the proof Lemma 4.1 in \cite{c1} (or see \cite{c9}), we have
\begin{equation}\label{e13}
|v|_{K,r\alpha}\leq {\alpha}^{1/\alpha}(S_r|h|_{K,q})^{1/2\alpha}|v|_{K,2\alpha q_1}.
\end{equation}
Now, we will prove the estimate for the $L_K^\infty$ norm.

(i) Fix $\chi=r/(2q_1)>1$ and $\alpha=\chi$, we have $2q_1\alpha=r$. The inequality \eqref{e13} can be rewritten as
\begin{equation}\label{e14}
|v|_{K,r\chi}\leq {\chi}^{1/\chi}(S_r|h|_{K,q})^{1/(2\chi)}|v|_{K,r}.
\end{equation}

(ii) Consider $\alpha=\chi^2$, we have $2q_1\alpha=r\chi$. Therefore, by (i) and \eqref{e13}, we obtain
\begin{equation}\label{e15}
|v|_{K,r\chi^2}\leq {\chi}^{2/\chi^2}(S_r|h|_{K,q})^{1/(2\chi^2)}|v|_{K,r\chi}.
\end{equation}
Based on equations \eqref{e14} and \eqref{e15}, we have

\begin{equation}\label{e16}
|v|_{K,r\chi^2}\leq {\chi}^{{1/\chi}+{2/\chi^2}}(S_r|h|_{K,q})^{({1/\chi}+1/\chi^2)/2}|v|_{K,r}.
\end{equation}

(iii) Choosing $\alpha=\chi^3$, we have $2q_1\alpha=r\chi^2$. Therefore, from (ii) and equation \eqref{e13}, we can obtain
\begin{equation}\label{e17}
|v|_{K,r\chi^3}\leq {\chi}^{3/\chi^3}(S_r|h|_{K,q})^{1/2\chi^3}|v|_{K,r\chi^2}.
\end{equation}
Using equations \eqref{e16} and \eqref{e17}, we have
\begin{equation}\label{e18}
|v|_{K,r\chi^3}\leq {\chi}^{{1/\chi}+{2/\chi^2}+{3/\chi^3}}(S_r|h|_{K,q})^{({1/\chi}+{1/\chi^2}+{1/\chi^3})/2}|v|_{K,r}.
\end{equation}
Repeating the above procedure for each $m\in\mathbb{N}$, we have the following inequality:
\begin{equation}\label{e19}
|v|_{K,r\chi^m}\leq {\chi}^{{1/\chi}+{2/\chi^2}+{3/\chi^3}+\cdots+{m/\chi^m}}
(S_r|h|_{K,q})^{({1/\chi}+{1/\chi^2}+{1/\chi^3}+\cdots+{1/\chi^m})/2}|v|_{K,r}.
\end{equation}
Because
\[\sum\limits_{m = 1}^\infty  {\frac{m}{{{\chi ^m}}}}  = \frac{1}{{\chi  - 1}},\quad \frac{1}{2}\sum\limits_{m = 1}^\infty  {\frac{1}{{{\chi ^m}}}}  = \frac{1}{{2(\chi  - 1)}}.\]
From equation \eqref{e19}, we can conclude that
\[|v|_{K,r\chi^m}\leq C|v|_{K,r},\]
where $C={\chi}^{1/(\chi-1)}(S_r|h|_{K,q})^{1/2(\chi-1)}.$ Letting $m\to \infty$, we finally have
\[|v|_{K,\infty}\leq C|v|_{K,r}.\]
\end{proof}

\begin{lemma}\label{l8}
Let $\{u_\lambda\}$ be a family of positive solutions to equation \eqref{e4} satisfying
\[\mathop {\sup\limits_{\lambda  \geq 1}  \{ {\Phi _\lambda }({u_\lambda })\} } < \left( {\frac{1}{2} - \frac{1}{{q + 1}}} \right){S^{N/2}}.\]
Then there exists $\lambda^*>0$ such that
\[|u_\lambda|_{K,\infty,\mathbb{R}^N\setminus\Omega_\Gamma^{'}}\leq e,\ \forall\ \lambda \geq \lambda^*.\]
Therefore, for $\lambda\geq \lambda^*$, $u_\lambda$ is a positive solution to equation \eqref{e1}.
\end{lemma}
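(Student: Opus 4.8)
The plan is to establish $\lim_{\lambda\to\infty}|u_\lambda|_{K,\infty,\mathbb{R}^N\setminus\Omega_\Gamma^{'}}=0$. This yields the stated bound (for $\lambda$ large the sup‑norm is $\le e$, indeed $\le a$), and as soon as $u_\lambda\le a$ off $\Omega_\Gamma^{'}$ one has $f(u_\lambda)=h(u_\lambda)$ there, while $g(x,\cdot)=h$ on $\Omega_\Gamma^{'}$; hence $g(x,u_\lambda)=\beta u_\lambda^q+u_\lambda^{2^*-1}$ on $\mathbb{R}^N$, so $u_\lambda$ solves \eqref{a1}, i.e.\ \eqref{e1} with $f(x,u)=\beta u^q+u^{2^*-1}-(\lambda a(x)+b(x))u$. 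Suppose the limit fails: there exist $\lambda_n\to\infty$ and $\varepsilon_0>0$ with $|u_{\lambda_n}|_{K,\infty,\mathbb{R}^N\setminus\Omega_\Gamma^{'}}\ge\varepsilon_0$. Since each $u_{\lambda_n}$ is a critical point of $\Phi_{\lambda_n}$, the computation in the proof of Lemma~\ref{l4} gives $0\le\delta_0(\frac{1}{2}-\frac{1}{q+1})\|u_{\lambda_n}\|_{K,\lambda_n}^2\le\Phi_{\lambda_n}(u_{\lambda_n})\le\sup_{\lambda\ge1}\Phi_\lambda(u_\lambda)<(\frac{1}{2}-\frac{1}{q+1})S^{N/2}$, so after a subsequence $\{u_{\lambda_n}\}$ is a $\text{(PS)}_{\infty,c}$ sequence with $c\in[0,(\frac{1}{2}-\frac{1}{q+1})S^{N/2})$. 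By Lemma~\ref{l4} it is bounded in $H_K^1(\mathbb{R}^N)$, and either Lemma~\ref{p3}(iii) (when $c>0$) or $\|u_{\lambda_n}\|_{K,\lambda_n}\to0$ (when $c=0$), combined with $\|\cdot\|_{K,\lambda}^2\ge M_0|\cdot|_{K,2}^2$, gives $|u_{\lambda_n}|_{K,2,\mathbb{R}^N\setminus\Omega_\Gamma}\to0$, hence also $|u_{\lambda_n}|_{K,2,\mathbb{R}^N\setminus\Omega_\Gamma^{'}}\to0$.

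Next I would obtain a uniform bound $|u_{\lambda_n}|_{K,\infty}\le C_0$. For $t\ge0$, $g(x,t)\le h(t)=\beta t^q+t^{2^*-1}$; splitting at $t=1$ one may write $g(x,u_{\lambda_n})\le(h_n(x)+\beta+1)u_{\lambda_n}$ with $h_n:=(\beta+1)u_{\lambda_n}^{2^*-2}$ on $\{u_{\lambda_n}>1\}$ and $0$ elsewhere. As $\{u_{\lambda_n}\}$ is bounded in $H_K^1$ hence in $L_K^{2^*}$ (Proposition~\ref{p5}) and $(2^*-2)\frac{N}{2}=2^*$, the norms $|h_n|_{K,N/2}$ are uniformly bounded; Proposition~\ref{p2} (applied to $-\Delta v-\frac{1}{2}(x\cdot\nabla v)+b_n v=g(x,v)$ with $b_n:=\lambda_n a+b\ge M_0>0$) then gives $u_{\lambda_n}\in L_K^p(\mathbb{R}^N)$ for all $p<\infty$ with $|u_{\lambda_n}|_{K,p}$ uniformly bounded. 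Therefore the coefficient $\beta u_{\lambda_n}^{q-1}+u_{\lambda_n}^{2^*-2}$, which satisfies $g(x,u_{\lambda_n})\le(\beta u_{\lambda_n}^{q-1}+u_{\lambda_n}^{2^*-2})u_{\lambda_n}$, has uniformly bounded $L_K^s$‑norm for any $s<\infty$, and Lemma~\ref{p4} (with some $s>N/2$) yields the uniform bound $|u_{\lambda_n}|_{K,\infty}\le C_0$.

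The remaining, and hardest, step is to promote the $L_K^2$‑smallness of $u_{\lambda_n}$ on $\mathbb{R}^N\setminus\Omega_\Gamma^{'}$ to $L_K^\infty$‑smallness — this is not delivered by Lemma~\ref{p4}, which bounds $|u_{\lambda_n}|_{K,\infty}$ only by $\|u_{\lambda_n}\|_K$, a quantity that need not vanish. I would exploit that the truncated nonlinearity is subcritical off $\Omega_\Gamma^{'}$: there $g(x,u_{\lambda_n})=f(u_{\lambda_n})\le\nu_0u_{\lambda_n}$, while $\lambda_n a+b=(a+b)+(\lambda_n-1)a\ge M_0>\nu_0$ (recall $\nu_0\approx0$ in Lemma~\ref{l1}, so $\nu_0<M_0$), whence $Lu_{\lambda_n}=f(u_{\lambda_n})-(\lambda_n a+b)u_{\lambda_n}\le(\nu_0-M_0)u_{\lambda_n}\le0$ in $\mathbb{R}^N\setminus\overline{\Omega_\Gamma^{'}}$: $u_{\lambda_n}\ge0$ is a weak subsolution of $L$ there. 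A local (weighted) De Giorgi--Moser sup‑estimate on balls $B_r(x_0)$ of radius $r\sim(1+|x_0|)^{-1}$ — on which $K$ is comparable to $K(x_0)$ and $r^N e^{-|x_0|^2/4}$ stays bounded — gives $u_{\lambda_n}(x_0)\le C|u_{\lambda_n}|_{K,2,\mathbb{R}^N\setminus\Omega_\Gamma^{'}}$ with $C$ independent of $x_0,n$, for every $x_0$ at fixed positive distance from $\partial\Omega_\Gamma^{'}$. Near $\partial\Omega_\Gamma^{'}$ I would fix once and for all a bounded open $V\supset\partial\Omega_\Gamma^{'}$ with $\overline V\cap\overline{\Omega_\Gamma}=\emptyset$ (possible as $\overline{\Omega_\Gamma}\subset\subset\Omega_\Gamma^{'}$); then $V\subset\mathbb{R}^N\setminus\overline{\Omega_\Gamma}$, so $|u_{\lambda_n}|_{K,2,V}\to0$, while in the open set $V$ the function $u_{\lambda_n}$ solves \eqref{e4} with $g(x,u_{\lambda_n})\le(\beta C_0^{q-1}+C_0^{2^*-2})u_{\lambda_n}$ and $b_n\ge M_0>0$, so the interior sup‑estimate (the large positive zeroth‑order coefficient only helps) yields $\sup_{V'}u_{\lambda_n}\to0$ for any $V'\subset\subset V$ containing $\partial\Omega_\Gamma^{'}$. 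Combining, $|u_{\lambda_n}|_{K,\infty,\mathbb{R}^N\setminus\Omega_\Gamma^{'}}\to0$, contradicting $\ge\varepsilon_0$. The essential difficulties are that Lemma~\ref{p4} is global, so making the sup‑norm \emph{vanish} forces a localized iteration on $\mathbb{R}^N\setminus\Omega_\Gamma^{'}$, which in turn requires taming the Gaussian weight $K$ at infinity (done via balls of radius $\sim(1+|x|)^{-1}$) and treating the boundary $\partial\Omega_\Gamma^{'}$, where only the one‑sided subsolution inequality is available, on a fixed two‑sided neighborhood $V$ on which $u_{\lambda_n}\to0$ in $L^2$ and all coefficients — notably $b_n$, large but of favourable sign — are controlled uniformly in $\lambda_n$.
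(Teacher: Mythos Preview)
Your argument is correct but follows a genuinely different route from the paper's. Both share the reduction to a $(\mathrm{PS})_{\infty,c}$ sequence and the uniform $L^\infty_K$ bound via Proposition~\ref{p2} and Lemma~\ref{p4}. For the crucial smallness on $\mathbb{R}^N\setminus\Omega_\Gamma'$, however, the paper argues by \emph{blow-up}: it rescales $v_n(x)=u_{\lambda_n}(\lambda_n^{-1/2}x+\bar x_n)$ about points $\bar x_n\in\partial\Omega_\Gamma'$, uses elliptic regularity to get uniform $C^2(B_1(0))$ bounds and pass to a $C^1$ limit $v$ solving a linear equation with zeroth-order coefficient $a(\bar x)>0$, concludes $v\equiv0$, contradicts $v_n(0)\ge\eta$, and then cites del~Pino--Felmer to propagate the bound from $\partial\Omega_\Gamma'$ to the full complement. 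You instead run a \emph{localized De Giorgi--Moser subsolution estimate} directly, converting the $L^2_K$-smallness from Lemma~\ref{p3}(iii) into $L^\infty$-smallness; your key devices --- the inequality $f(t)\le\nu_0 t$ with $\nu_0<M_0$ making $Lu_{\lambda_n}\le0$ off $\Omega_\Gamma'$, balls of radius $(1+|x_0|)^{-1}$ on which $K$ is comparable to $K(x_0)$ and $r^{-N/2}K(x_0)^{-1/2}$ stays bounded, and a fixed collar $V\supset\partial\Omega_\Gamma'$ with $\overline V\cap\overline\Omega=\emptyset$ on which $\lambda_n a$ eventually dominates the (now bounded) nonlinearity --- are all sound. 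Your approach avoids $C^{1,\alpha}$ regularity and the Liouville-type step for the rescaled limit, and treats the boundary layer and infinity within one framework; the paper's route is shorter if one takes those tools off the shelf. You are also more careful than the paper in treating the borderline case $c=0$, where Lemma~\ref{p3} is not stated to apply but $M(0)=0$ in Lemma~\ref{l4} gives $\|u_{\lambda_n}\|_{K,\lambda_n}\to0$ directly.
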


\begin{proof}
Let $\{\lambda_n\}$ be a sequence, $\lambda_n\to\infty$, and define $u_n(x)=u_{\lambda_n}(x)$. Then $u_{\lambda_n}(x)$ is a bounded sequence of positive solutions to equation \eqref{e4}. Using Lemma \ref{p3}, we have $u_n\to u$ in $H_K^1(\mathbb{R}^N)$, where $u$ is the weak limit of $u_n$ in $H_K^1(\mathbb{R}^N)$. Furthermore, since there exists a constant $C>0$ such that
\[g(x,u_n)\leq u_n+Cu_n^{2^*-1}\leq (1+e_n(x))u_n,\]
where $e_n(x)=C|u_n|^{2^*-2}$ and converges to $u^{2^*-2}$ in $L_K^{N/2}(\mathbb{R}^N)$. Using Proposition \ref{p2}, we know that the sequence $\{|u_n|_{K,r}\}$ is uniformly bounded for every $r>1$. Letting $r>2^*$, we can write equation \eqref{e4} as
\[ - \Delta u - \frac{1}{2}(x \cdot \nabla u) +(\lambda_n a(x)+b(x)-\nu_0)u_n =\tilde{g}(x,s):=g(x,u_n)-\nu_0u_n\in\mathbb{R}^N.\]
Note that
\[\tilde{g}(x,u_n)\leq Cu_n^{2^*-1}=e_n(x)u_n.\]
We can verify that $e_n(x)=Cu_n^{2^*-2}\in L_K^q(\mathbb{R}^N)$, where $q=r/(2^*-2)$ and $q>N/2$. Lemma \ref{p4} ensures that for some $C_0>0$,
\[|u_n|_{K,\infty}\leq C_0,\forall\ n\in\mathbb{R}.\]

Now let $v_n(x) = u_{\lambda_n}(\varepsilon_nx+\bar{x}_n)$, where $\varepsilon_n^2 = 1/\lambda_n$ and $\{\bar{x}_n\} \subset \partial\Omega_\Gamma^{'}$. Without loss of generality, we assume $\bar{x}_n \to \bar{x} \in \partial\Omega_\Gamma^{'}$. We can obtain $|v_n|_{K,\infty} \leq C_0$,
\[ - \Delta v_n - \frac{1}{2}(x \cdot \nabla v_n) +(a(\varepsilon_nx+\bar{x}_n)+\varepsilon_n^2 b(\varepsilon_nx+\bar{x}_n))v_n =  \varepsilon_n^2g(\varepsilon_nx+\bar{x}_n,v_n),\]
and
\[|g(\varepsilon_nx+\bar{x}_n,v_n)|\leq |v_n|+C|v_n|^{2^*-1}.\]
This implies the existence of $C_1 > 0$ such that
\[||v_n||_{K,C^2(B_1(0))}\leq C_1,\ \forall\ n\in \mathbb{N}.\]
The above estimate indicates that the weak limit $v$ of the sequence $\{v_n\} \subset H_K^1(\mathbb{R}^N)$ belongs to $C^1(B_1(0))$, and
\[v_n\to v\in C^1(B_1(0)),\ n\to\infty.\]
Assuming by contradiction that there exists $\eta > 0$ such that
\[u_{\lambda_n}(\bar{x}_n)\geq\eta,\ \forall\ n\in \mathbb{N},\]
then we have
\[u_n(0)\geq\eta,\ \forall\ n\in \mathbb{N}.\]
Therefore, in $B_1(0)$, $v\ne0$.

On the other hand, the function $v$ satisfies the equation
\[- \Delta v - \frac{1}{2}(x \cdot \nabla v)+a(\bar{x})v=0\in\mathbb{R}^N.\]
This implies that $v\equiv 0$, which contradicts the fact that $v\ne0$ in $B_1(0)$. Therefore, there exists $\lambda^*>0$ such that
\[|u_\lambda|_{K,\infty,\partial\Omega_\Gamma^{'}}\leq e,\ \forall\ \lambda\geq\lambda^*.\]
Through a similar proof process to theorem 0.1 in \cite{c7}, we have
\[|u_\lambda|_{K,\infty,\mathbb{R}^N\setminus\Omega_\Gamma^{'}}\leq e,\ \forall\ \lambda\geq\lambda^*.\]
\end{proof}

\section{Critical Value of the Functional $\Phi_\lambda$}
For any $\lambda \geq 1$ and $j \in \Gamma$, we define $\Phi_{\lambda,j}: H_K^1(\Omega_j^{'}) \to \mathbb{R}$ as follows:
\[\Phi_{\lambda,j}(u) = \frac{1}{2}\int_{\Omega_j^{'}} {K(x)(|\nabla u|^2 + (\lambda a(x) + b(x))u^2)dx}  - \frac{\beta }{{q + 1}}\int_{\Omega_j^{'}} {K(x){{(u_+)}^{q + 1}}} dx - \frac{1}{{2^*}}\int_{\Omega_j^{'}} {K(x){{(u_+)}^{2^*}}} dx.\]

We know that the critical points of $\Phi_{\lambda,j}$ are weak solutions of the following elliptic equation with Neumann boundary conditions,
\[\left\{ \begin{gathered}
- \Delta u - \frac{1}{2}(x \cdot \nabla u) + (\lambda a(x)+b(x))u  =  \beta u^q +u^{2^*-1}\in\Omega_j^{'} , \hfill \\
  0<u \in \Omega_j^{'}, \hfill \\
  \frac{\partial u}{\partial \eta}, \ {\rm on}\ \partial\Omega_j^{'}. \hfill \\
\end{gathered}  \right .\]

It is known that $\Phi_{\lambda,j}$ satisfies the mountain-pass geometry condition. We denote the related minimax level associated with the functional as $c_{\lambda,j}$, defined as
\[c_{\lambda ,j} := \inf_{\gamma  \in \Gamma_{\lambda ,j}} \max_{t \in [0,1]} \Phi_{\lambda ,j}(\gamma (t)),\]
where
\[\Gamma_{\lambda,j}=\left\{\gamma\in C\left([0,1],H_K^1(\Omega_j^{'})\right)\ \big|\ \gamma(0)=0,\Phi_{\lambda,j}(\gamma(1))<0\right\}.\]

Since $\beta$ is very small, by referring to \cite{c18,c17,c21,c26}, we can know that there exist two nonnegative functions $w_j\in H_K^1(\Omega_j)$ and $w_{\lambda,j}\in H_K^1(\Omega_j^{'})$ satisfying
\[I_j(w_j)=c_j,\quad I_j^{\prime}(w_j)=0, \]
\[\Phi_{\lambda,j}(w_{\lambda,j})=c_{\lambda,j},\quad \Phi_{\lambda,j}^{\prime}(w_{\lambda,j})=0,\]
where $I_j$ is defined as in equation \eqref{e21}.

Let $R>1$ be chosen such that
\[\bigg|I_j\left(\frac{1}{R}w_j\right)\bigg|<\frac{1}{2}c_j,\ \forall\ j\in\Gamma,\]
\[|I_j(Rw_j)-c_j|\geq 1,\ \forall\ j\in\Gamma.\]
From the definition of $c_j$, it is standard to prove the equality
\begin{equation}\label{e22}
\max_{s \in [1/R^2,1]} I_j(sRw_j)=I_j(w_j)=c_j,\ \forall\ j\in\Gamma.
\end{equation}
The choice of the interval $[1/R^2,1]$ is made for the subsequent proof.

Let us reconsider the set $\Gamma=\{1,\cdots,l\}$, where $l\leq k$. We define
\[{[1/{R^2},1]^l} = \underbrace {[1/{R^2},1] \times  \cdots  \times [1/{R^2},1]}_{l\text{ times}},\]
\[\gamma_0:[1/{R^2},1]^l\to\bigcup_{j \in \Gamma } {H_K^1}({\Omega_j})\subset {H_K^1}({\Omega_\Gamma^{'}}), \]
\begin{equation}\label{e25}
\gamma_0(s_1,s_2,\cdots,s_l)(x)=\sum_{j = 1}^l {s_j}R{w_j}(x),
\end{equation}
\[b_{\lambda,\Gamma}=\inf_{\gamma  \in \Gamma_*} \max_{({s_1}, \cdots ,{s_l}) \in {[1/{R^2},1]^l}} {\Phi _\lambda }(\gamma ({s_1}, \cdots ,{s_l})),\]
where
\[\Gamma_{*}=\left\{\gamma\in C\left([1/R^2,1]^l,H_K^1(\Omega_\Gamma^{'})\setminus\{0\}\right)\ \big|\ \gamma=\gamma_0,\text{on}\ \partial([1/R^2,1]^l)\right\}.\]
We observe that $\gamma_0\in\Gamma_*$, so $\Gamma_*\ne \emptyset$, and $b_{\lambda,\Gamma}$ is well-defined.

\begin{lemma}\label{e26}
For any $\gamma\in\Gamma_*$, there exists $(t_1,\cdots,t_l)\in[1/R^2,1]^l$ such that
\[\langle\Phi_{\lambda,j}^{\prime}(\gamma(t_1,\cdots,t_l)),
\gamma(t_1,\cdots,t_l)\rangle=0,\ j\in\{1,\cdots,l\}.\]
\end{lemma}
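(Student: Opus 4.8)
The plan is to reduce the claim to a degree-theoretic (or intermediate-value) argument on the cube $[1/R^2,1]^l$. Define the auxiliary map $\Psi:\[1/R^2,1\]^l\to\mathbb{R}^l$ whose $j$-th component is
\[\Psi_j(t_1,\dots,t_l) = \langle\Phi_{\lambda,j}^{\prime}(\gamma(t_1,\dots,t_l)),\gamma(t_1,\dots,t_l)\rangle.\]
Since $\gamma\in C([1/R^2,1]^l,H_K^1(\Omega_\Gamma^{'}))$ and $\Phi_{\lambda,j}\in C^1$, the map $\Psi$ is continuous. The goal is to show $\Psi$ has a zero; the statement then follows at that point $(t_1,\dots,t_l)$.

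First I would analyze $\Psi$ on the boundary $\partial([1/R^2,1]^l)$, where $\gamma=\gamma_0$ by definition of $\Gamma_*$. On that set, $\gamma_0(s_1,\dots,s_l)=\sum_{i=1}^l s_iRw_i$, and because the sets $\Omega_i^{'}$ are mutually disjoint with $\overline{\Omega_i}\subset\Omega_i^{'}$, the functions $w_i$ have disjoint supports; hence $\langle\Phi_{\lambda,j}^{\prime}(\gamma_0),\gamma_0\rangle$ depends only on the single variable $s_j$ and, using that $w_j$ is supported in $\Omega_j\subset\Omega_j^{'}$ where $a\equiv 0$, it coincides (up to the harmless $\chi_\Gamma$ cutoff, since on $\Omega_\Gamma^{'}$ one has $g(x,t)=h(t)$) with $\frac{d}{ds}\big|_{s=s_j}I_j(sRw_j)\cdot s_j$. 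From the choice of $R$ — namely $|I_j(\frac1Rw_j)|<\frac12 c_j$ and $|I_j(Rw_j)-c_j|\ge 1$, together with the fact that $s\mapsto I_j(sRw_j)$ is strictly increasing on $[0,1/R^2]$ and strictly decreasing past its maximum at $s=1/R$ (standard for mountain-pass fibering maps in the subcritical-dominated regime) — one deduces that $\Psi_j>0$ on the face $\{s_j=1/R^2\}$ and $\Psi_j<0$ on the face $\{s_j=1\}$. Thus $\Psi$ restricted to the boundary is (homotopic through $\Psi|_{\partial}$ itself to) the map $(t_1,\dots,t_l)\mapsto(\tfrac12-t_1,\dots,\tfrac12-t_l)$ up to sign and rescaling, which has nonzero Brouwer degree on the cube.

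The core step is then a homotopy-invariance argument: since $\gamma\equiv\gamma_0$ on $\partial([1/R^2,1]^l)$, the map $\Psi$ agrees with the explicit "product" map on the boundary, so $\deg(\Psi,(1/R^2,1)^l,0)=\deg(\Psi_0,(1/R^2,1)^l,0)\ne 0$ where $\Psi_0$ is built from the one-variable functions $t_j\mapsto\langle I_j^{\prime}(t_jRw_j),t_jRw_j\rangle$. By the existence property of Brouwer degree, $\Psi$ vanishes somewhere in the interior, giving the desired $(t_1,\dots,t_l)$. (Alternatively, avoiding degree theory, one can invoke the Poincaré–Miranda theorem directly: $\Psi_j$ has opposite signs on the two opposite faces $\{s_j=1/R^2\}$ and $\{s_j=1\}$, which is exactly its hypothesis.)

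The main obstacle I anticipate is verifying the sign conditions on the boundary faces rigorously — specifically, showing that $\langle\Phi_{\lambda,j}^{\prime}(\gamma_0),\gamma_0\rangle$ genuinely decouples into a function of $s_j$ alone and matches the fibering derivative of $I_j$. This requires care with two points: (a) that on $\Omega_\Gamma^{'}$ the truncated nonlinearity $g(x,\cdot)$ equals the original $h(\cdot)$, so that $\Phi_{\lambda,j}$ restricted to functions supported in $\Omega_j$ reduces to $I_j$ (here $a\equiv0$ on $\Omega_j$ kills the $\lambda a(x)$ term); and (b) that the cross terms vanish because $\mathrm{supp}\,w_i\cap\mathrm{supp}\,w_j=\emptyset$ for $i\ne j$ — which is where assumptions $(a_1)$(ii) and the choice $\overline{\Omega_j}\subset\Omega_j^{'}$ with $\overline{\Omega_j^{'}}\cap\overline{\Omega_l^{'}}=\emptyset$ are used. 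Once the decoupling is established, the sign information is immediate from equation \eqref{e22} and the choice of $R$, and the degree/Miranda conclusion is routine.
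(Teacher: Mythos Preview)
Your proposal is correct and follows essentially the same route as the paper: both define the auxiliary map $(s_1,\dots,s_l)\mapsto\big(\langle\Phi_{\lambda,j}'(\gamma),\gamma\rangle\big)_{j=1}^l$, use the boundary condition $\gamma=\gamma_0$ together with the decoupling $\langle\Phi_{\lambda,j}'(\gamma_0),\gamma_0\rangle=\langle I_j'(s_jRw_j),s_jRw_j\rangle$ to control the map on $\partial([1/R^2,1]^l)$, and conclude via Brouwer degree. Your write-up is in fact more explicit than the paper's on the decoupling and the sign analysis at the faces (the paper simply asserts that the only boundary zero occurs at $s_j=1/R$ and that $\deg=(-1)^l$), and your mention of Poincar\'e--Miranda as an alternative is a nice equivalent formulation.
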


\begin{proof}
For a given $\gamma\in\Gamma_*$, we consider a mapping $\tilde{\gamma}:[1/{R^2},1]^l\to \mathbb{R}^l$ defined as
\[\tilde{\gamma}(s_1,s_2,\cdots,s_l)
=\left(\Phi_{\lambda,1}^{\prime}(\gamma)(\gamma),\cdots,
\Phi_{\lambda,l}^{\prime}(\gamma)(\gamma)\right),\]
where
\[\Phi_{\lambda,j}^{\prime}(\gamma)(\gamma)=
\langle\Phi_{\lambda,j}^{\prime}(\gamma(s_1,\cdots,s_l)),
\gamma(s_1,\cdots,s_l)\rangle,\ \forall\ j \in \Gamma.\]
For $(s_1,s_2,\cdots,s_l)\in\partial([1/R^2,1]^l)$, we have
\[\gamma(s_1,s_2,\cdots,s_l)=\gamma_0(s_1,s_2,\cdots,s_l).\]
Using equation \eqref{e22}, we obtain
\[\langle\Phi_{\lambda,j}^{\prime}(\gamma_0(s_1,\cdots,s_l)),
\gamma_0(s_1,\cdots,s_l)\rangle=0,\]
which implies
\[s_j=\frac{1}{R},\ \forall\ j\in\Gamma.\]
Therefore, $(0,\dots,0)\notin \tilde{\gamma}(\partial([1/R^2,1]^l)).$ By some algebraic manipulation, we obtain the following topological degree,
\[\text{deg}(\tilde{\gamma},[1/R^2,1]^l,(0,\cdots,0))=(-1)^l.\]
Hence, by the property of topological degree, there exists $(t_1,\cdots,t_l)\in(1/R^2,1)^l$ such that
\[\langle\Phi_{\lambda,j}^{\prime}(\gamma(t_1,\cdots,t_l)),
\gamma(t_1,\cdots,t_l)\rangle=0,\ j\in\{1,\cdots,l\}.\]
\end{proof}

In the following, we define ${c_\Gamma } := \sum\limits_{j = 1}^l {{c_j}}$. It plays a crucial role in the proof of Theorem \ref{t1}. We will analyze the relationship between $\sum\limits_{j = 1}^l {{c_{\lambda,j}}},b_{\lambda,\Gamma}$, and $c_\Gamma$, where we need the condition $c_\Gamma\in \left(0, \left( {\frac{1}{2} - \frac{1}{{q + 1}}} \right)S^{N/2}\right)$.

\begin{lemma}\label{l6}
(i) $ \sum\limits_{j = 1}^l {{c_{\lambda,j}}}\leq b_{\lambda,\Gamma}\leq c_\Gamma,\ \forall\ \lambda\geq 1.$

(ii) $ \Phi_\lambda(\gamma(s_1,s_2,\cdots,s_l))<c_\Gamma,\ \forall\ \lambda\geq 1, \gamma \in \Gamma_*,(s_1,s_2,\cdots,s_l)\in\partial([1/R^2,1]^l).$
\end{lemma}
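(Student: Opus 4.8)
The plan is to reduce both inequalities to one-bump computations by means of a single decomposition identity. First I would record the structural fact that, since the closures $\overline{\Omega_j'}$ ($j\in\Gamma$) are pairwise disjoint and $\Omega_\Gamma'=\bigcup_{j}\Omega_j'$, every $u\in H_K^1(\Omega_\Gamma')$ (extended by $0$) splits as the disjoint sum of its restrictions $u|_{\Omega_j'}$, and since $\chi_\Gamma\equiv 1$ on $\Omega_\Gamma'$ (so that $G(x,\cdot)=H(\cdot)$ there) this gives
\[
\Phi_\lambda(u)=\sum_{j=1}^{l}\Phi_{\lambda,j}\bigl(u|_{\Omega_j'}\bigr);
\]
moreover, if $u$ is supported in $\bigcup_{j}\Omega_j$ — which is the case for $\gamma_0(s)$, as $\operatorname{supp}w_j\subset\overline{\Omega_j}$ — then each summand equals $I_j(u|_{\Omega_j})$, because $a\equiv 0$ on $\Omega_j$. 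This identity is the engine of the argument.

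For part (ii) and the upper bound in (i) I would use the path $\gamma_0\in\Gamma_*$. For $(s_1,\dots,s_l)\in[1/R^2,1]^l$ the identity gives $\Phi_\lambda(\gamma_0(s))=\sum_{j}I_j(s_jRw_j)$, and \eqref{e22} yields $I_j(s_jRw_j)\le c_j$ with equality attainable for each $j$; maximizing coordinatewise gives $\max_s\Phi_\lambda(\gamma_0(s))=\sum_j c_j=c_\Gamma$, hence $b_{\lambda,\Gamma}\le c_\Gamma$. If moreover $(s_1,\dots,s_l)\in\partial([1/R^2,1]^l)$ then $\gamma(s)=\gamma_0(s)$ for every $\gamma\in\Gamma_*$, and at least one coordinate $s_{j_0}$ equals $1/R^2$ or $1$; the two conditions imposed on $R$ (that $|I_{j_0}(\tfrac1Rw_{j_0})|<\tfrac12 c_{j_0}$, and $|I_{j_0}(Rw_{j_0})-c_{j_0}|\ge 1$ together with $I_{j_0}(Rw_{j_0})\le c_{j_0}$) force $I_{j_0}(s_{j_0}Rw_{j_0})<c_{j_0}$ strictly, so summing gives $\Phi_\lambda(\gamma_0(s))<c_\Gamma$, which is (ii).

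For the lower bound in (i) I would fix an arbitrary $\gamma\in\Gamma_*$ and apply Lemma \ref{e26} to obtain $(t_1,\dots,t_l)\in[1/R^2,1]^l$ with $\langle\Phi_{\lambda,j}'(v_j),v_j\rangle=0$ for all $j$, where $v_j:=\gamma(t_1,\dots,t_l)|_{\Omega_j'}$. Since
\[
\langle\Phi_{\lambda,j}'(v_j),v_j\rangle=\|v_j\|_{K,\lambda,\Omega_j'}^2-\beta\!\int_{\Omega_j'}\!K(x)(v_j)_+^{q+1}\,dx-\int_{\Omega_j'}\!K(x)(v_j)_+^{2^*}\,dx=0
\]
and $\|\cdot\|_{K,\lambda,\Omega_j'}$ is a genuine norm, $v_j\ne 0$ forces $(v_j)_+\ne 0$; the fibering map $t\mapsto\Phi_{\lambda,j}(tv_j)$ then has a unique positive critical point, which is a strict global maximum, and since $\langle\Phi_{\lambda,j}'(v_j),v_j\rangle=0$ it sits at $t=1$, so $\Phi_{\lambda,j}(v_j)=\max_{t\ge 0}\Phi_{\lambda,j}(tv_j)\ge c_{\lambda,j}$ by the mountain-pass characterization of $c_{\lambda,j}$. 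Feeding this into the decomposition identity,
\[
\max_{s\in[1/R^2,1]^l}\Phi_\lambda(\gamma(s))\ \ge\ \Phi_\lambda(\gamma(t_1,\dots,t_l))\ =\ \sum_{j=1}^{l}\Phi_{\lambda,j}(v_j)\ \ge\ \sum_{j=1}^{l}c_{\lambda,j},
\]
and taking the infimum over $\gamma\in\Gamma_*$ gives $\sum_j c_{\lambda,j}\le b_{\lambda,\Gamma}$.

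The hard part will be the last step: one must make sure that each restriction $v_j$ is genuinely nonzero (hence genuinely on the Nehari manifold), since if some $v_j\equiv 0$ then the term $c_{\lambda,j}$ would be lost from the sum and the bound would fail. To settle this one cannot use only the existence of a zero of the intersection map in Lemma \ref{e26}; one must also use that the zero is found in the interior $(1/R^2,1)^l$, that $\gamma(t_1,\dots,t_l)\not\equiv 0$ since $\gamma$ maps into $H_K^1(\Omega_\Gamma')\setminus\{0\}$, and the sign behavior of $\langle\Phi_{\lambda,j}'(\cdot),\cdot\rangle$ along the boundary faces (positive for $s_j<1/R$, negative for $s_j>1/R$). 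A secondary point to verify with care, because of the critical Sobolev exponent, is the elementary fibering fact that $t\mapsto\Phi_{\lambda,j}(tv)$ is strictly increasing up to its unique critical point and strictly decreasing afterward — which rests on the positivity of both the $t^{q+1}$ and the $t^{2^*}$ coefficients and is exactly what pins the maximum at $t=1$.
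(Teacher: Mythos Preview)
Your proof is correct and follows the same route as the paper: the upper bound and part (ii) come from the explicit path $\gamma_0$ together with the identity $\Phi_\lambda(\gamma_0(s))=\sum_j I_j(s_jRw_j)$, while the lower bound combines Lemma~\ref{e26} with the Nehari characterization of $c_{\lambda,j}$. You are in fact more careful than the paper about the $v_j\not\equiv 0$ issue, which the paper passes over silently.
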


\begin{proof}
We use a similar proof strategy as in Proposition 4.2 of \cite{c19}.

(i) From \eqref{e25}, for $\gamma_0\in \Gamma_*$, we have
\[b_{\lambda,\Gamma}\leq\mathop {\max }\limits_{({s_1}, \cdots ,{s_l}) \in {{[1/{R^2},1]}^l}} {\Phi _\lambda }(\gamma_0 ({s_1}, \cdots ,{s_l}))=\mathop {\max }\limits_{({s_1}, \cdots ,{s_l}) \in {{[1/{R^2},1]}^l}} \sum \limits_{j=1}^l{I_j(sRw_j)}= \sum \limits_{j=1}^l{c_j}=c_\Gamma.\]
Fix $(t_1,t_2,\cdots,t_l)\in[1/R^2,1]^l$ as in Lemma \ref{e26}. By the definition of $c_{\lambda,j}$, we have
\[c_{\lambda,j}=\text{inf}\left\{\Phi_{\lambda,j}\ \big|\ u\in H_K^1(\Omega_j^{'})\setminus\{0\},\langle\Phi_{\lambda,j}^{\prime}(u),
u\rangle=0\right\},\]
\[\Phi_{\lambda,j}(\gamma(t_1,\cdots,t_l))\geq c_{\lambda,j},\ \forall \ j\in\Gamma.\]
On the other hand, since
\[\Phi_{\lambda,\mathbb{R}^N\setminus\Omega_\Gamma^{'}}(u)\geq 0,\ \forall \ u \in H_K^1({\mathbb{R}^N\setminus\Omega_\Gamma^{'}}),\]
we have
\[\Phi_{\lambda}(\gamma(s_1,\cdots,s_l))\geq \sum \limits_{j=1}^l \Phi_{\lambda,j}(\gamma(s_1,\cdots,s_l)).\]
Therefore,
\[\mathop {\max }\limits_{({s_1}, \cdots ,{s_l}) \in {{[1/{R^2},1]}^l}} {\Phi _\lambda }(\gamma ({s_1}, \cdots ,{s_l})\geq {\Phi _\lambda }(\gamma ({t_1}, \cdots ,{t_l}))\geq \sum \limits_{j=1}^lc_{\lambda,j}.\]
By the definition of $b_{\lambda,\Gamma}$, we obtain
\[b_{\lambda,\Gamma}\geq \sum \limits_{j=1}^lc_{\lambda,j}.\]

(ii) Because for any $(s_1,s_2,\cdots,s_l) \in \partial([1/R^2,1]^l)$, we have
\[\gamma(s_1,s_2,\cdots,s_l)=\gamma_0(s_1,s_2,\cdots,s_l)\ \text{on}\ \partial([1/R^2,1]^l),\]
so
\[\Phi_{\lambda}(\gamma_0(s_1,\cdots,s_l))=\sum \limits_{j=1}^l{I_j(s_jRw_j)}.\]
Furthermore, for all $j\in\Gamma,$ we have ${I_j(s_jRw_j)}\leq c_j$. For some $j_0\in\Gamma,s_{j_0}\in\{1/R^2,1\}$, we have ${I_{j_0}(s_{j_0}Rw_{j_0})}\leq c_{j_0}/2.$ Therefore, for some $\varepsilon>0$, we have
\[\Phi_{\lambda}(\gamma_0(s_1,\cdots,s_l))\leq c_\Gamma-\varepsilon\]
for some $\varepsilon>0$.
\end{proof}

\begin{corollary}\label{co1}
(i) As $\lambda \to \infty$, $b_{\lambda,\Gamma}\to c_\Gamma$.

(ii) When $\lambda$ is large, $b_{\lambda,\Gamma}$ is a critical value of $\Phi_\lambda$.
\end{corollary}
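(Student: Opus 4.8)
The plan is to prove (i) first, then deduce (ii) from (i) together with the compactness machinery already established. For part (i), I would combine the two-sided estimate in Lemma \ref{l6}(i), namely $\sum_{j=1}^l c_{\lambda,j}\le b_{\lambda,\Gamma}\le c_\Gamma$, with the fact that each local minimax level $c_{\lambda,j}$ converges to $c_j$ as $\lambda\to\infty$. The latter is the standard "penalization raises the energy to the Dirichlet level" phenomenon: since the weight $\lambda a(x)$ blows up on $\Omega_j'\setminus\overline{\Omega_j}$ while vanishing on $\Omega_j$, one shows $\limsup_{\lambda\to\infty} c_{\lambda,j}\le c_j$ by using a fixed test path built from $w_j\in H_K^1(\Omega_j)$ extended by zero (its energy under $\Phi_{\lambda,j}$ is independent of $\lambda$), and $\liminf_{\lambda\to\infty} c_{\lambda,j}\ge c_j$ by taking the mountain-pass solutions $w_{\lambda,j}$, using Lemma \ref{p3}-type convergence to extract a limit supported in $\Omega_j$ that is a nonnegative solution of the Dirichlet problem \eqref{e27} with energy $\le\liminf c_{\lambda,j}$, hence $\ge c_j$ by minimality of $c_j$. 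Summing over $j\in\Gamma$ gives $\sum_{j=1}^l c_{\lambda,j}\to c_\Gamma$, and the sandwich forces $b_{\lambda,\Gamma}\to c_\Gamma$.

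For part (ii), I would argue that $b_{\lambda,\Gamma}$ is a critical value by a deformation/min-max argument: $\Gamma_*$ is a class of maps respecting the boundary datum $\gamma_0$ on $\partial([1/R^2,1]^l)$, and Lemma \ref{l6}(ii) shows that on that boundary the values of $\Phi_\lambda$ stay strictly below $c_\Gamma-\varepsilon$, a level strictly below $b_{\lambda,\Gamma}$ once $\lambda$ is large (by part (i), $b_{\lambda,\Gamma}$ is close to $c_\Gamma$). Thus if $b_{\lambda,\Gamma}$ were a regular value, a pseudo-gradient deformation supported away from the sublevel set $\{\Phi_\lambda\le c_\Gamma-\varepsilon\}$ would push a near-optimal $\gamma\in\Gamma_*$ below $b_{\lambda,\Gamma}$ without disturbing its boundary behaviour, contradicting the definition of $b_{\lambda,\Gamma}$ as an infimum. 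The one technical prerequisite is that $\Phi_\lambda$ satisfies $\text{(PS)}_{b_{\lambda,\Gamma}}$, which holds because, for $\lambda$ large, $b_{\lambda,\Gamma}$ lies in the window $\bigl(0,(\tfrac12-\tfrac1{q+1})S^{N/2}\bigr)$ guaranteed by \eqref{e23} and part (i), so Lemma \ref{p1} applies.

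The main obstacle I anticipate is the lower bound $\liminf_{\lambda\to\infty} c_{\lambda,j}\ge c_j$ in part (i): one must show that the weak limit of the penalized mountain-pass solutions $w_{\lambda,j}$ is genuinely concentrated in $\Omega_j$ (vanishing on $\Omega_j'\setminus\overline{\Omega_j}$) and is \emph{nontrivial}, so that it is an admissible competitor for $c_j$. Nontriviality is the delicate point — it requires a uniform lower bound $\|w_{\lambda,j}\|\ge\text{const}>0$, which follows from Lemma \ref{l4} applied with $c=c_{\lambda,j}>0$ bounded below (itself a consequence of the mountain-pass geometry being uniform in $\lambda$), while the vanishing outside $\overline{\Omega_j}$ is exactly the Fatou-lemma argument carried out in the proof of Lemma \ref{p3}. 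Once nontriviality and concentration are in hand, lower semicontinuity of the (Dirichlet) energy and the characterization of $c_j$ as the least energy among nonzero solutions of \eqref{e27} close the gap. Everything else — the upper bound via the fixed path, the sandwich, and the deformation argument in (ii) — is routine.
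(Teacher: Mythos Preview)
Your proposal is correct and follows essentially the same route as the paper: for (i), both sandwich $b_{\lambda,\Gamma}$ using Lemma~\ref{l6}(i) and show $c_{\lambda,j}\to c_j$ via a Lemma~\ref{p3}-type concentration argument on the penalized mountain-pass solutions; for (ii), both use \eqref{e23} together with part (i) to place $b_{\lambda,\Gamma}$ in the compactness window of Lemma~\ref{p1}, and then conclude by the deformation lemma. The paper's proof is considerably terser but structurally identical; your discussion of the nontriviality of the limit in the $c_{\lambda,j}\to c_j$ step supplies detail the paper leaves implicit.
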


\begin{proof}
(i) For all $\lambda\geq 1$ and each $j$, we have $0<c_{\lambda,j}\leq c_j$. Using a similar proof strategy as in Lemma \ref{p3}, we can show that as $\lambda \to \infty$, $c_{\lambda,j}\to c_j$. Therefore, from Lemma \ref{l6}, we conclude that as $\lambda \to \infty$, $b_{\lambda,\Gamma}\to c_\Gamma$.

(ii) From (i) and equation \eqref{e23}, we choose a sufficiently large $\lambda$ such that
\[b_{\lambda,\Gamma}\simeq c_\Gamma\in\left(0, \left( {\frac{1}{2} - \frac{1}{{q + 1}}} \right)S^{N/2}\right).\]
Lemma \ref{p1} implies that any $\text{(PS)}_{b_{\lambda,\Gamma}}$ sequence of the functional $\Phi_\lambda$ has a strongly convergent subsequence in $H_{K,\lambda}(\mathbb{R}^N)$. Using this fact, we can conclude from the argument of the deformation lemma that for $\lambda \geq 1$, $b_{\lambda,\Gamma}$ is a critical value of $\Phi_\lambda$.
\end{proof}

\section{Proof of the Main Theorem}
To prove Theorem \ref{t1}, we need to find a positive solution $u_\lambda$ that approximates the a least-energy solution in each $\Omega_j$ when $\lambda$ is large, and vanishes elsewhere as $\lambda \to \infty$. To do this, we will prove two lemmas that, combined with the estimates made in the above section, can establish the validity of Theorem \ref{t1}.

Let
\[M := 1 + \sum\limits_{j = 1}^k {\sqrt {{{\left( {\frac{1}{2} - \frac{1}{{q + 1}}} \right)}^{ - 1}}{c_j}} }, \]
\[{\bar B_{M + 1}}(0) := \left\{ {u \in {H_{K,\lambda }}({\mathbb{R}^N})\ \big|\ ||u||_{K,\lambda} \leq M + 1} \right\}.\]
For a small $\mu>0$, we define
\[A_\mu ^\lambda  := \left\{ {u \in {{\bar B}_{M + 1}}(0)\ \big|\ ||u||_{K,\lambda,\mathbb{R}^N\setminus\Omega_\Gamma^{'}} \leq \mu ,{|\Phi _{\lambda ,j}}(u) - {c_j} |\leq \mu ,\forall j \in \Gamma } \right\}.\]
We also define
\[\Phi _{\lambda}^{c_\Gamma}:=\left\{u\in {H_{K,\lambda }}({\mathbb{R}^N})\ |\ \Phi_\lambda(u)\leq c_\Gamma\right\},\]
\[w=\sum\limits_{j = 1}^lw_j\in A_\mu ^\lambda \cap\Phi _{\lambda}^{c_\Gamma},\]
 which means $A_\mu ^\lambda \cap\Phi _{\lambda}^{c_\Gamma}\ne \emptyset$. Fix
\begin{equation}\label{e24}
0<\mu<\frac{1}{3}\text{min}\{c_j\ |\ j\in\Gamma\}.
\end{equation}
We obtain a uniform estimate for $||\Phi_\lambda^{\prime}(u)||_{K,\lambda}$ on $(A_{2\mu} ^\lambda\setminus A_\mu ^\lambda)\cap \Phi _{\lambda}^{c_\Gamma}$.

\begin{lemma}\label{l7}
Let $\mu>0$ satisfy \eqref{e24}. Then there exists $\sigma_0>0$ and $\Lambda_*\geq1$ independent of $\lambda$, such that
\[||\Phi_\lambda^{\prime}(u)||_{K,\lambda}\geq \sigma_0,\ \lambda\geq\Lambda_*,\forall\ u\in (A_{2\mu} ^\lambda\setminus A_\mu ^\lambda)\cap \Phi _{\lambda}^{c_\Gamma}.\]
\end{lemma}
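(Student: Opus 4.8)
The plan is to argue by contradiction: suppose no such uniform lower bound exists. Then we can find a sequence $\lambda_n\to\infty$ and points $u_n\in(A_{2\mu}^{\lambda_n}\setminus A_\mu^{\lambda_n})\cap\Phi_{\lambda_n}^{c_\Gamma}$ with $\|\Phi_{\lambda_n}'(u_n)\|_{K,\lambda_n}\to 0$. By construction $u_n\in\bar B_{M+1}(0)$, so $\{u_n\}$ is uniformly bounded in $H_{K,\lambda_n}^1(\mathbb{R}^N)$, and moreover $\Phi_{\lambda_n}(u_n)\le c_\Gamma$; since $u_n\in A_{2\mu}^{\lambda_n}$ we also have $\|u_n\|_{K,\lambda_n,\mathbb{R}^N\setminus\Omega_\Gamma'}\le 2\mu$ and $|\Phi_{\lambda_n,j}(u_n)-c_j|\le 2\mu$ for each $j\in\Gamma$. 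The first step is to extract from these facts that $\{u_n\}$ is (after passing to a subsequence) a $(\mathrm{PS})_{\infty,c}$-type sequence: using $\Phi_{\lambda_n}(u_n)\ge\sum_{j\in\Gamma}\Phi_{\lambda_n,j}(u_n)\ge\sum_{j\in\Gamma}(c_j-2\mu)$ together with the upper bound $c_\Gamma=\sum_{j\in\Gamma}c_j$, one sees $\Phi_{\lambda_n}(u_n)\to c$ for some $c\in[\,c_\Gamma-2l\mu,\,c_\Gamma\,]$, and by the choice \eqref{e24} of $\mu$ this forces $c\in\left(0,\left(\frac12-\frac1{q+1}\right)S^{N/2}\right)$ since $c_\Gamma$ lies in that interval by \eqref{e23}. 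Hence $\{u_n\}$ is a genuine $(\mathrm{PS})_{\infty,c}$ sequence.

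\textbf{Applying Lemma \ref{p3}.} Now invoke Lemma \ref{p3}: passing to a further subsequence there is $u\in H_K^1(\mathbb{R}^N)$ with $u_n\to u$ in $H_K^1(\mathbb{R}^N)$ and, more importantly, $\|u_n-u\|_{K,\lambda_n}\to 0$; also $u\equiv0$ on $\mathbb{R}^N\setminus\Omega_\Gamma$, $\|u_n\|_{K,\lambda_n,\mathbb{R}^N\setminus\Omega_\Gamma'}^2\to 0$, and $\|u_n\|_{K,\lambda_n,\Omega_j'}^2\to\int_{\Omega_j}K(x)(|\nabla u|^2+b(x)u^2)dx$ for $j\in\Gamma$, while $u|_{\Omega_j}$ solves \eqref{e11}. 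Combining the energy convergence on $\Omega_j'$ with the nonlinear-term convergence (which follows from $u_n\to u$ strongly and the growth of $g$, exactly as in the proof of Lemma \ref{p1}), one gets $\Phi_{\lambda_n,j}(u_n)\to I_j(u|_{\Omega_j})$. But $|\Phi_{\lambda_n,j}(u_n)-c_j|\le 2\mu$ passes to the limit to give $|I_j(u|_{\Omega_j})-c_j|\le 2\mu$; in particular $I_j(u|_{\Omega_j})\ge c_j-2\mu>0$ by \eqref{e24}, so $u|_{\Omega_j}\not\equiv 0$ for every $j\in\Gamma$. Since $u|_{\Omega_j}$ is a nonnegative critical point of $I_j$ and $c_j$ is the mountain-pass (= least energy) level, we must actually have $I_j(u|_{\Omega_j})=c_j$.

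\textbf{Deriving the contradiction.} This says precisely that the limit $u$ satisfies $\|u\|_{K,\lambda,\mathbb{R}^N\setminus\Omega_\Gamma'}=0$ and $\Phi_{\lambda,j}(u)=c_j$ for all $j\in\Gamma$ — in the limiting sense — so for $n$ large $u_n$ satisfies $\|u_n\|_{K,\lambda_n,\mathbb{R}^N\setminus\Omega_\Gamma'}\le\mu$ and $|\Phi_{\lambda_n,j}(u_n)-c_j|\le\mu$; combined with $u_n\in\bar B_{M+1}(0)$ this means $u_n\in A_\mu^{\lambda_n}$ for all large $n$, contradicting $u_n\notin A_\mu^{\lambda_n}$. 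Therefore the assumed sequence cannot exist, and the desired $\sigma_0>0$ and $\Lambda_*\ge1$ exist. The main obstacle is the passage to the limit in the split functionals $\Phi_{\lambda_n,j}$ — one needs the quadratic part on $\Omega_j'$ to converge to the quadratic part of $I_j$ on $\Omega_j$ (this is the content of the third displayed limit in Lemma \ref{p3}(iii), which absorbs the penalization on the annulus $\Omega_j'\setminus\Omega_j$ where $a\ge\mathrm{const}>0$), and the subcritical and critical nonlinear terms to converge via the strong $H_K^1$ and $L_{K,\mathrm{loc}}^{2^*}$ convergence; once these are in hand, everything else is bookkeeping against the threshold \eqref{e24}.
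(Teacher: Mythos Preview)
Your argument is essentially the same as the paper's own proof: both proceed by contradiction, extract a $(\mathrm{PS})_{\infty,c}$ sequence, invoke Lemma~\ref{p3} to pass to a limit $u$ supported on $\Omega_\Gamma$ with $u|_{\Omega_j}$ a nonnegative critical point of $I_j$, and then reach a contradiction with membership in $A_{2\mu}^{\lambda_n}\setminus A_\mu^{\lambda_n}$. The paper organizes the endgame as an explicit dichotomy (either every $I_j(u|_{\Omega_j})=c_j$, or some $u|_{\Omega_{j_0}}\equiv 0$), whereas you rule out the vanishing alternative directly via $I_j(u|_{\Omega_j})\ge c_j-2\mu>0$; the two presentations are equivalent.

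One point, however, is not justified as written. You assert that ``since $u|_{\Omega_j}$ is a nonnegative critical point of $I_j$ and $c_j$ is the mountain-pass (= least energy) level, we must actually have $I_j(u|_{\Omega_j})=c_j$.'' The least-energy property only gives the inequality $I_j(u|_{\Omega_j})\ge c_j$; nothing prevents a nontrivial critical point from sitting at a strictly higher energy. To force equality you must also use the global energy constraint: from $\Phi_{\lambda_n,\mathbb{R}^N\setminus\Omega_\Gamma'}(u_n)\to 0$ and $\Phi_{\lambda_n,j}(u_n)\to I_j(u|_{\Omega_j})$ you get
\[
\sum_{j\in\Gamma} I_j(u|_{\Omega_j})=\lim_{n\to\infty}\Phi_{\lambda_n}(u_n)=c\le c_\Gamma=\sum_{j\in\Gamma}c_j,
\]
and combining this with $I_j(u|_{\Omega_j})\ge c_j$ for every $j$ yields $I_j(u|_{\Omega_j})=c_j$. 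This is exactly the (implicit) reason behind the paper's dichotomy as well; once you insert this one line, your proof is complete.
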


\begin{proof}
We use the proof strategy of Proposition 4.4 in \cite{c19} to prove this lemma.

Proof by contradiction. Suppose there exist $\lambda_n\to \infty$,
\[u\in  (A_{2\mu} ^\lambda\setminus A_\mu ^\lambda)\cap \Phi _{\lambda}^{c_\Gamma},\]
such that $||\Phi_{\lambda_n}^{\prime}(u_n)||_K\to0$.

Since $u_n \in A_{2\mu} ^{\lambda_n}$ and $\{||u_n||_{K,\lambda_n}\}$ is a bounded sequence, we can conclude that $\{\Phi_{\lambda_n}(u_n)\}$ is also bounded. Therefore, we can assume that
\[\Phi_{\lambda_n}(u_n)\to c \in (-\infty,c_\Gamma].\]

According to Lemma \ref{p3}, in $H_K^1(\mathbb{R^N})$ we have a subsequence $u_n\to u$, where $u\in H_K^1(\Omega_\Gamma)$ is a nonnegative solution of equation \eqref{e27}, satisfying
\begin{equation}\label{e28}
u_n\to u,\ u\in H_K^1(\mathbb{R^N}),
\end{equation}
\begin{equation}\label{e29}
\lambda_n\int_{\mathbb{R^N}}K(x)a(x)|u_n|^2\to 0,
\end{equation}
\begin{equation}\label{e30}
||u_n||_{K,\lambda_n,\mathbb{R^N}
\setminus{\Omega_\Gamma}}\to 0.
\end{equation}
Since $c_j$ is the a least-energy value for $I_j$, we have two cases:\\
(i) $I_{j}(u|_{\Omega_j})=c_j,\ \forall \ j \in \Gamma.$\\
(ii) $I_{j_0}(u|_{\Omega_{j_0}})=0.$ That is, there exists $j_0\in\Gamma$ such that $u|_{\Omega_{j_0}}\equiv 0.$

If (i) occurs, according to \eqref{e28},\eqref{e29}, and \eqref{e30}, it can be seen that when $n$ is large, $u_n \in A_{\mu}^{\lambda_n}$. This contradicts the assumption that $u \in (A_{2\mu}^{\lambda_n} \setminus A_\mu^{\lambda_n})$.

If (ii) occurs, according to \eqref{e28} and \eqref{e29}, we have
\[|\Phi_{\lambda_n,j_0}(u_n)-c_{j_0}|\to c_{j_0}\geq 3\mu.\]
This contradicts the assumption that $u \in (A_{2\mu}^{\lambda_n} \setminus A_\mu^{\lambda_n})$. Therefore, neither (i) nor (ii) holds. The proof is complete.
\end{proof}

\begin{lemma}\label{l9}
Let $\mu>0$ satisfy  \eqref{e24}, and let $\Lambda_*\geq1$ be a constant given in Lemma \ref{l7}. Then, for $\lambda \geq \Lambda_*$, there exists a positive solution $u_\lambda$ to equation \eqref{a1} in $A_\mu ^\lambda \cap\Phi _{\lambda}^{c_\Gamma}$.
\end{lemma}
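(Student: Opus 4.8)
The plan is to obtain $u_\lambda$ as a local minimax critical point of $\Phi_\lambda$ constrained to the set $A_\mu^\lambda \cap \Phi_\lambda^{c_\Gamma}$, using the gradient bound of Lemma \ref{l7} to run a deformation argument that does not push the flow out of this set. First I would fix $\lambda \geq \Lambda_*$ and recall from Corollary \ref{co1} that, enlarging $\Lambda_*$ if necessary, we may assume $b_{\lambda,\Gamma}$ is a critical value of $\Phi_\lambda$ with $b_{\lambda,\Gamma}$ as close to $c_\Gamma$ as we like; in particular $b_{\lambda,\Gamma} \in \bigl(0,(\tfrac12-\tfrac1{q+1})S^{N/2}\bigr)$, so Lemma \ref{p1} gives the $(\mathrm{PS})_{b_{\lambda,\Gamma}}$ condition. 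The candidate critical point will be produced by the minimax class $\Gamma_*$ used to define $b_{\lambda,\Gamma}$: for any $\gamma \in \Gamma_*$, Lemma \ref{e26} supplies $(t_1,\dots,t_l)$ with $\langle \Phi_{\lambda,j}'(\gamma(t)),\gamma(t)\rangle = 0$ for all $j$, hence $\Phi_{\lambda,j}(\gamma(t)) \geq c_{\lambda,j}$, and combined with Lemma \ref{l6}(i) this forces $\max \Phi_\lambda \circ \gamma$ to be attained, roughly speaking, at a point lying near the "expected" energy profile $(c_1,\dots,c_l)$ on $\Omega_1',\dots,\Omega_l'$ and with small mass outside $\Omega_\Gamma'$.

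Next I would make precise the claim that any near-optimal $\gamma$ must pass through $A_\mu^\lambda$. Suppose not: then for some $\gamma \in \Gamma_*$ with $\max_t \Phi_\lambda(\gamma(t))$ within $\sigma_0$-dependent tolerance of $b_{\lambda,\Gamma}$, the path avoids $A_\mu^\lambda$. By Lemma \ref{l6}(ii) the endpoints (on $\partial([1/R^2,1]^l)$) already have $\Phi_\lambda \leq c_\Gamma - \varepsilon$, so they lie in $\Phi_\lambda^{c_\Gamma}$; meanwhile the point $(t_1,\dots,t_l)$ from Lemma \ref{e26} is a point of $\gamma$ sitting in $\Phi_\lambda^{c_\Gamma}$ and, since it is not in $A_\mu^\lambda$ by assumption while its energy components cannot be too far from $c_j$, it must lie in $A_{2\mu}^\lambda \setminus A_\mu^\lambda$ (here is where the choice \eqref{e24} of $\mu$ and the relation between $b_{\lambda,\Gamma}$ and $\sum c_{\lambda,j}$ are used to rule out the alternative that some component energy has dropped far below $c_j$). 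Then Lemma \ref{l7} gives $\|\Phi_\lambda'\| \geq \sigma_0$ along that portion of the path, so a standard deformation — using the $(\mathrm{PS})$ condition to build a pseudo-gradient flow that leaves $\Phi_\lambda^{c_\Gamma}$ invariant and decreases $\Phi_\lambda$ off $A_\mu^\lambda$ — pushes $\gamma$ to a new path $\tilde\gamma \in \Gamma_*$ (the deformation fixes $\partial([1/R^2,1]^l)$, where $\Phi_\lambda$ is already below $c_\Gamma - \varepsilon$) with strictly smaller max, contradicting the definition of $b_{\lambda,\Gamma}$ once the tolerance is taken small enough. Hence every near-optimal path meets $A_\mu^\lambda \cap \Phi_\lambda^{c_\Gamma}$, and a further deformation/minimization argument yields a critical point $u_\lambda$ of $\Phi_\lambda$ in $A_\mu^\lambda \cap \Phi_\lambda^{c_\Gamma}$ at level $b_{\lambda,\Gamma}$.

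It remains to upgrade $u_\lambda$ from a critical point of the truncated functional $\Phi_\lambda$ to a genuine positive solution of \eqref{a1}. Since $u_\lambda \in A_\mu^\lambda \subset \bar B_{M+1}(0)$, the family $\{\Phi_\lambda(u_\lambda)\}$ is bounded above by $c_\Gamma < (\tfrac12-\tfrac1{q+1})S^{N/2}$ uniformly in $\lambda$, so Lemma \ref{l8} applies: there is $\lambda^* \geq \Lambda_*$ with $|u_\lambda|_{K,\infty,\mathbb{R}^N\setminus\Omega_\Gamma'} \leq e$ for $\lambda \geq \lambda^*$. Recalling that on $\Omega_\Gamma'$ one has $g(x,u) = h(u) = \beta u^q + u^{2^*-1}$ by definition of $g$ and $\chi_\Gamma$, while on $\mathbb{R}^N \setminus \Omega_\Gamma'$ the truncations $f$ and $h$ agree on $[0,a]$ and $u_\lambda \leq a$ there (using $h(a)/a = \nu_0$ and the definition of $a$ as that value, together with $u_\lambda \leq e \leq a$ after possibly relabeling the threshold constant as in the statement of Lemma \ref{l8}), we conclude $g(x,u_\lambda) = \beta u_\lambda^q + u_\lambda^{2^*-1}$ everywhere, i.e. $u_\lambda$ solves \eqref{a1}. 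Positivity follows since $u_\lambda \geq 0$ (the (PS) sequences, hence the critical point, are nonnegative by the Remark) and $u_\lambda \not\equiv 0$ because $\Phi_\lambda(u_\lambda) = b_{\lambda,\Gamma} \to c_\Gamma > 0$, followed by the strong maximum principle for $L + b(x)$ applied componentwise on each $\Omega_j'$ (and, off $\Omega_\Gamma'$, a Harnack-type argument as in Lemma \ref{l8}).

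\textbf{Main obstacle.} The delicate point is the trapping/deformation step: one must show that the pseudo-gradient flow for $\Phi_\lambda$ can be arranged to leave $A_\mu^\lambda \cap \Phi_\lambda^{c_\Gamma}$ positively invariant near its boundary, so that deforming a near-optimal path cannot create a new path of smaller max that escapes the set — this is exactly where Lemma \ref{l7}'s uniform lower bound $\sigma_0$ on the shell $A_{2\mu}^\lambda \setminus A_\mu^\lambda$ and the strict inequality in Lemma \ref{l6}(ii) on the boundary of the cube are combined. Getting the quantitative bookkeeping right — the tolerance relating $b_{\lambda,\Gamma}$ to $c_\Gamma$, the size of $\mu$, and the deformation time — so that the contradiction with the definition of $b_{\lambda,\Gamma}$ actually closes, is the technical heart of the argument.
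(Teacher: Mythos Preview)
Your overall strategy --- a deformation argument combining the minimax class $\Gamma_*$, the shell gradient bound from Lemma \ref{l7}, and the boundary inequality from Lemma \ref{l6}(ii) --- is the same one the paper uses. However, your implementation has a genuine gap in the trapping step.

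You want to show that every near-optimal $\gamma \in \Gamma_*$ meets $A_\mu^\lambda$, arguing by contradiction that if such a $\gamma$ avoids $A_\mu^\lambda$, then the Nehari point $(t_1,\dots,t_l)$ from Lemma \ref{e26} lands in $A_{2\mu}^\lambda \setminus A_\mu^\lambda$, where Lemma \ref{l7} kicks in. The problem is that membership in $A_{2\mu}^\lambda$ requires not only $|\Phi_{\lambda,j}(u)-c_j|\leq 2\mu$ (which one can indeed extract from the Nehari constraint together with $c_{\lambda,j}\to c_j$) but also $\|u\|_{K,\lambda,\mathbb{R}^N\setminus\Omega_\Gamma'} \leq 2\mu$ and $\|u\|_{K,\lambda}\leq M+1$. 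For an \emph{arbitrary} near-optimal path $\gamma$ there is no a~priori control on the norm outside $\Omega_\Gamma'$, so nothing forces the Nehari point into $A_{2\mu}^\lambda$; Lemma \ref{l7} then gives you nothing, and the deformation cannot be run on that portion.

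The paper avoids this by not attempting to trap arbitrary paths. It assumes, for contradiction, that there is no critical point in $A_\mu^\lambda\cap\Phi_\lambda^{c_\Gamma}$, so that (by the (PS) condition from Lemma \ref{p1}) $\|\Phi_\lambda'\|\geq d_\lambda>0$ on that set as well. It then builds a truncated pseudo-gradient flow $\eta$ (cut off outside $A_{2\mu}^\lambda$) and applies it to the \emph{specific} path $\gamma_0(s_1,\dots,s_l)=\sum_j s_jRw_j$, which by construction has $\|\gamma_0(s)\|_{K,\lambda,\mathbb{R}^N\setminus\Omega_\Gamma'}=0$ and hence sits inside $A_\mu^\lambda$ near its maximum. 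A Lipschitz estimate for $\Phi_{\lambda,j}$ on $\bar B_{M+1}(0)$ shows that in order to cross from $A_{3\mu/2}^\lambda$ to the complement of $A_{2\mu}^\lambda$, the flow must spend a definite amount of time in the shell where $\|\Phi_\lambda'\|\geq\sigma_0$, hence lose a fixed amount $\varepsilon_*>0$ of energy, \emph{independent of} $\lambda$. The deformed path $\gamma^*=\eta(T,\gamma_0)\in\Gamma_*$ then satisfies $\max\Phi_\lambda\circ\gamma^*\leq c_\Gamma-\varepsilon_*$, so $b_{\lambda,\Gamma}\leq c_\Gamma-\varepsilon_*$ for all $\lambda\geq\Lambda_*$. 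This contradicts $b_{\lambda,\Gamma}\to c_\Gamma$ from Corollary \ref{co1}(i) --- so the contradiction is with the \emph{asymptotic} value of $b_{\lambda,\Gamma}$, not with its definition at fixed $\lambda$ as you suggest.

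A minor organizational point: the upgrade from a critical point of $\Phi_\lambda$ to a genuine positive solution of \eqref{a1} via Lemma \ref{l8} is not carried out inside the paper's proof of this lemma; it is deferred to the proof of Theorem \ref{t1}. Your treatment of that part is essentially correct.
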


\begin{proof}
Proof by contradiction. Suppose there is no critical point in $A_\mu ^\lambda \cap\Phi _{\lambda}^{c_\Gamma}$. Since $\Phi_\lambda$ satisfies the $\text{(PS)}$ condition in $\left(0, \left( {\frac{1}{2} - \frac{1}{{q + 1}}} \right)S^{N/2}\right)$, there exists a constant $d_\lambda>0$ such that
\[||\Phi_\lambda^{\prime}(u)||_K\geq d_\lambda, \ \forall\ u\in A_\mu ^\lambda \cap\Phi _{\lambda}^{c_\Gamma}.\]
From the assumption, we have
\[||\Phi_\lambda^{\prime}(u)||_K\geq \sigma_0, \ \forall\ u\in (A_{2\mu} ^\lambda\setminus A_\mu ^\lambda)\cap \Phi _{\lambda}^{c_\Gamma},\]
where $\sigma_0>0$ is independent of $\lambda$. We define $\Psi:H_{K,\lambda}^1(\mathbb{R}^N)\to \mathbb{R}$ and $W:\Phi _{\lambda}^{c_\Gamma} \to \mathbb{R}$ as continuous functions satisfying
\[\Psi (u) = \left\{ \begin{gathered}
  1,u \in  A_{3\mu/2} ^\lambda,\hfill \\
  0,u \notin A_{2\mu} ^\lambda, \hfill \\
\end{gathered}  \right.\]
\[0\leq\Psi (u)\leq 1, u\in H_{K,\lambda}^1(\mathbb{R}^N)\]
and
\[W (u) = \left\{ \begin{gathered}
  -\Psi(u)||Y(u)||_K^{-1}||Y(u)||_K,u \in  A_{2\mu} ^\lambda,\hfill \\
  0,u \notin A_{2\mu} ^\lambda, \hfill \\
\end{gathered}  \right.\]
where $Y$ is the pseudo-gradient vector field of $\Phi_{\lambda}$ on $N=\{u\in H_{K,\lambda}^1(\mathbb{R}^N):\Phi_{\lambda}^{\prime}(u)\ne0\}$. Thus, using the properties of $Y$ and $\Phi_{\lambda}$, we have the following inequality,
\[||W(u)||_K \leq 1,\ \forall \ \lambda  \geq {\Lambda _*},u \in \Phi _\lambda ^{c_\Gamma }.\]
Consider the deformation flow defined as $\eta:[0,\infty)\times \Phi _{\lambda}^{c_\Gamma} \to \Phi _{\lambda}^{c_\Gamma}$,
\[\left\{ \begin{gathered}
  \frac{d\eta}{dt}=W(\eta),\hfill \\
  \eta(0,u)=u\in\Phi _{\lambda}^{c_\Gamma}. \hfill \\
\end{gathered}  \right.\]
Note that there exists $K_*>0$ such that
\[|\Phi_{\lambda,j}(u)-\Phi_{\lambda,j}(v)|\leq K_*||u-v||_{K,\lambda,\Omega_j^{'}}, \ \forall\ u,v \in \bar{B}_{M+1}(0), \forall \ j\in \Gamma.\]
Using a similar argument as in \cite{c12}, we obtain two numbers $T=T(\lambda)>0$ and $\varepsilon_*>0$ independent of $\lambda$ such that
\[\gamma^*(s_1,s_2,\cdots,s_l)=\eta(T,\gamma_0(s_1,s_2,\cdots,s_l))\in\Gamma_*,\]
\[\mathop {\max }\limits_{({s_1}, \cdots ,{s_l}) \in {{[1/{R^2},1]}^l}} {\Phi _\lambda }(\gamma^* ({s_1},\cdots,s_l))\leq c_\Gamma-\varepsilon_*.\]
Combining the definition of $b_{\lambda,\Gamma}$ and the above conclusion, we obtain the inequality
\[b_{\lambda,\Gamma}\leq c_\Gamma-\varepsilon_*, \ \forall\ \lambda \geq \Lambda_*.\]
This contradicts Corollary \ref{co1}.
\end{proof}

Now we prove Theorem \ref{t1}.
\begin{proof}
According to Lemma \ref{l9}, there exists a family of positive solutions $u_\lambda$ to equation \eqref{a1} with the following properties:

(i) Fix $\mu>0$. There exists $\lambda^*$ such that
\[||u_\lambda||_{K,\lambda,\mathbb{R}^N\setminus\Omega_\Gamma^{'}}\leq \mu,\ \forall\ \lambda\geq \lambda^*.\]
Therefore, from the proof of Lemma \ref{l8}, by choosing $\mu$ sufficiently small, we can conclude that
\[|u_\lambda|_{K,\infty,\mathbb{R}^N\setminus\Omega_\Gamma^{'}}\leq e,\ \forall\ \lambda\geq \lambda^*.\]
This implies that $u_\lambda$ is a positive solution to equation \eqref{a1}.

(ii) Fix $\lambda_n\to\infty$ and $\mu_n\to0$. The sequence $\{u_{\lambda_n}\}$ satisfies
\[\Phi _{\lambda _n}(u_{\lambda_n})=0,\ \forall\ n \in \mathbb{N},\]
\[||u_{\lambda_n}||_{K,\lambda_n,\mathbb{R}^N\setminus\Omega_\Gamma}\to 0,\]
\[\Phi _{\lambda _n,j}(u_{\lambda_n})\to c_j,\ \forall \ j \in \Gamma,\]
\[u_{\lambda_n}\to u\in H_K^1(\mathbb{R}^N), u\in H_K^1(\Omega_\Gamma),\]
from which the proof of Theorem \ref{t1} follows.
\end{proof}


\end{document}